
\documentclass[12pt,reqno]{amsart}
\usepackage{fullpage,url,amssymb,enumerate,colonequals}
\usepackage[all]{xy} 
\usepackage{mathrsfs} 

\usepackage[OT2,T1]{fontenc}
\DeclareSymbolFont{cyrletters}{OT2}{wncyr}{m}{n}
\DeclareMathSymbol{\Sha}{\mathalpha}{cyrletters}{"58}

\usepackage{color}

\newcommand{\defi}[1]{\textsf{#1}} 

\newcommand{\Aff}{\mathbb{A}}

\newcommand{\F}{\mathbb{F}}

\newcommand{\PP}{\mathbb{P}}

\newcommand{\R}{\mathbb{R}}
\newcommand{\Xtilde}{\widetilde{X}}
\newcommand{\Z}{\mathbb{Z}}

\newcommand{\Vbar}{{\overline{V}}}
\newcommand{\Xbar}{{\overline{X}}}


\newcommand{\calC}{\mathcal{C}}

\newcommand{\calP}{\mathcal{P}}
\newcommand{\calQ}{\mathcal{Q}}

\newcommand{\calX}{\mathcal{X}}

\newcommand{\OO}{\mathscr{O}}
\newcommand{\TT}{\mathscr{T}}
\newcommand{\VV}{\mathscr{V}}
\newcommand{\XX}{\mathscr{X}}
\newcommand{\YY}{\mathscr{Y}}


\DeclareMathOperator{\Frob}{Frob}
\DeclareMathOperator{\Gal}{Gal}

\DeclareMathOperator{\homog}{homog}

\DeclareMathOperator{\horiz}{horiz}
\DeclareMathOperator{\Id}{Id}

\DeclareMathOperator{\Irr}{Irr}

\DeclareMathOperator{\NE}{NE}

\DeclareMathOperator{\PIC}{\bf Pic}

\DeclareMathOperator{\Proj}{Proj}

\DeclareMathOperator{\re}{Re}

\DeclareMathOperator{\Spec}{Spec}

\DeclareMathOperator{\Stab}{Stab}



\newcommand{\et}{{\operatorname{et}}}

\newcommand{\red}{{\operatorname{red}}}
\newcommand{\sing}{{\operatorname{sing}}}
\newcommand{\smooth}{{\operatorname{smooth}}}

\newcommand{\GL}{\operatorname{GL}}

\newcommand{\injects}{\hookrightarrow}
\newcommand{\intersect}{\cap} 
\newcommand{\Intersection}{\bigcap} 
\newcommand{\isom}{\simeq}

\newcommand{\tensor}{\otimes} 
\newcommand{\union}{\cup} 
\newcommand{\Union}{\bigcup} 


\newtheorem{theorem}{Theorem}[section]
\newtheorem{lemma}[theorem]{Lemma}
\newtheorem{corollary}[theorem]{Corollary}
\newtheorem{proposition}[theorem]{Proposition}

\theoremstyle{definition}

\newtheorem{example}[theorem]{Example}

\theoremstyle{remark}
\newtheorem{remark}[theorem]{Remark}
\newtheorem{remarks}[theorem]{Remarks}

\makeatletter
\newcommand{\interitemtext}[1]{%
\begin{list}{}
{\itemindent=0mm\labelsep=0mm
\labelwidth=0mm\leftmargin=0mm
\addtolength{\leftmargin}{-\@totalleftmargin}}
\item #1
\end{list}}
\makeatother

\usepackage{microtype}

\usepackage[
	pagebackref,
	pdfauthor={Bjorn Poonen}, 
]{hyperref}
\usepackage[alphabetic,lite]{amsrefs} 

\begin{document}

\title[Bertini irreducibility]{Bertini irreducibility theorems over finite fields}
\subjclass[2010]{Primary 14J70; Secondary 14N05}
\keywords{Bertini irreducibility theorem, finite field}
\author{Fran\c{c}ois Charles}
\address{Department of Mathematics, Massachusetts Institute of Technology, Cambridge, MA 02139-4307, USA; and CNRS, Laboratoire de Math\'ematiques d'Orsay, Universit\'e Paris-Sud, 91405 Orsay CEDEX, France}
\email{francois.charles@math.u-psud.fr}
\author{Bjorn Poonen}
\thanks{This research was supported in part by National Science Foundation grants DMS-1069236 and DMS-1601946 and a grant from the Simons Foundation (\#402472 to Bjorn Poonen).  The article was originally published in \emph{J.~Amer.\ Math.\ Soc.}\ \textbf{29} (2016), no.~1, 81--94.}
\address{Department of Mathematics, Massachusetts Institute of Technology, Cambridge, MA 02139-4307, USA}
\email{poonen@math.mit.edu}
\urladdr{\url{http://math.mit.edu/~poonen/}}
\date{July 6, 2014; corrected May 3, 2017}

\begin{abstract}
Given a geometrically irreducible subscheme $X \subseteq \PP^n_{\F_q}$
of dimension at least $2$,
we prove that the fraction of degree~$d$ hypersurfaces $H$
such that $H \intersect X$ is geometrically irreducible
tends to $1$ as $d \to \infty$.
We also prove variants in which $X$ is over an
extension of $\F_q$, and in which the immersion 
$X \to \PP^n_{\F_q}$ is replaced by a more general morphism.
\end{abstract}

\maketitle

\section{Introduction}\label{S:introduction}

The classical Bertini theorems over an infinite field $k$
state that if a subscheme $X \subseteq \PP^n_k$
has a certain property 
(smooth, geometrically reduced, geometrically irreducible), 
then a sufficiently general hyperplane section over $k$ has
the property too.
In \cite{Poonen2004-bertini}, 
an analogue of the Bertini smoothness theorem for a finite field $\F_q$ 
was proved, in which hyperplanes 
were replaced by hypersurfaces of degree tending to infinity.

The goal of the present article is to prove Bertini irreducibility
theorems over finite fields.
The proof of the Bertini irreducibility theorem over infinite fields $k$
\cite{Jouanolou1983}*{Th\'eor\`eme~6.3(4)} 
relies on the fact that 
a dense open subscheme of $\PP^n_k$ has a $k$-point, 
so the proof fails over a finite field: see also the
end of the introduction in \cite{Benoist2011},
where this problem is mentioned.
The proof of the Bertini smoothness theorem over finite fields
in \cite{Poonen2004-bertini} 
depends crucially on the fact that smoothness can be checked 
analytically locally, one closed point at a time;
in contrast, irreducibility and geometric irreducibility are not local
properties.
Therefore our proof must use ideas beyond those used in proving
the earlier results.
Indeed, our proof requires ingredients that are perhaps unexpected: 
resolution of singularities for surfaces, cones of curves in a surface,
and the function field Chebotarev density theorem.

\subsection{Results for subschemes of projective space}

Let $\F_q$ be a finite field of size $q$.
Let $\F$ be an algebraic closure of $\F_q$.
Let $S=\F_q[x_0,\ldots,x_n]$ be the homogeneous coordinate ring 
of $\PP^n_{\F_q}$,
let $S_d \subset S$
be the $\F_q$-subspace of homogeneous polynomials of degree $d$,
and let $S_{\homog}=\Union_{d=0}^\infty S_d$.
For each $f \in S_{\homog}$, let $H_f$ be the subscheme
$\Proj(S/(f)) \subseteq \PP^n$,
so $H_f$ is a hypersurface (if $f$ is not constant).
Define the \defi{density} of a subset $\calP \subseteq S_{\homog}$
by
\[
	\mu(\calP):= \lim_{d \to \infty} 
		\frac{\#(\calP \cap S_d)}{\# S_d},
\]
if the limit exists.
Define \defi{upper} and \defi{lower density} similarly,
using $\limsup$ or $\liminf$ in place of $\lim$.

\begin{theorem}
\label{T:geometrically irreducible}
Let $X$ be a geometrically irreducible subscheme of $\PP^n_{\F_q}$.
If $\dim X \ge 2$, then the density of 
\[
	\{ f \in S_{\homog} : 
	\textup{$H_f \intersect X$ is geometrically irreducible}\}
\]
is $1$.
\end{theorem}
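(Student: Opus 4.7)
The strategy is to reduce to the case of surfaces, resolve singularities of the surface, and bound the density of $f \in S_d$ for which $H_f \intersect X$ fails to be geometrically irreducible by treating two failure modes separately.

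\emph{Reduction to a smooth surface.} For $\dim X \ge 3$, I would prove the theorem by a cohomological Bertini argument: Serre vanishing (applied via Serre duality on a Cohen--Macaulay model) gives $H^1(X, \calO_X(-d)) = 0$ for $d \gg 0$, so every $H_f \intersect X$ is geometrically connected, and combined with the Bertini smoothness theorem of \cite{Poonen2004-bertini} on the smooth locus of $X$ to control the behaviour at the generic point of each component, one deduces geometric irreducibility for a density-$1$ set of $f$. The substantial case is $\dim X = 2$. There I would choose a resolution of singularities $\pi \colon \widetilde{X} \to X$ (Lipman), let $\widetilde{D}_f \subset \widetilde{X}$ be the strict transform of $H_f \intersect X$, and use that $H_f \intersect X$ is geometrically irreducible iff $\widetilde{D}_f$ is; the problem now lives in the linear system $|\pi^*\calO_X(d) - E|$ on the smooth projective surface $\widetilde{X}$ (with $E$ a suitable exceptional correction).

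\emph{Case (i): $\widetilde{D}_f$ reducible over $\F_q$.} A decomposition $\widetilde{D}_f = D_1 + D_2$ with $D_i$ effective and nonzero gives classes $\alpha_i = [D_i] \in \NS(\widetilde{X})$ with $\alpha_1 + \alpha_2 = [\widetilde{D}_f]$. For each fixed pair $(\alpha_1, \alpha_2)$, the locus of such $f$ in $S_d$ has dimension at most $h^0(\widetilde{X}, \alpha_1) + h^0(\widetilde{X}, \alpha_2) + O(1)$ by parametrizing $(D_1, D_2)$ in their respective linear systems. I would then use Kleiman--Mori theory and the structure of the cone of effective curves on $\widetilde{X}$ to bound both the number of admissible pairs $(\alpha_1, \alpha_2)$ and the $h^0$ values, and sum these dimension estimates to show that the total bad locus in $S_d$ has codimension $\Omega(d)$, hence density $0$. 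The hard part is precisely this summation: for a single class the Riemann--Roch bound is routine, but the naive sum over all decompositions diverges, and one must use genuine structural information about the cone of curves on $\widetilde{X}$ — this is where the algebro-geometric inputs flagged in the introduction enter.

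\emph{Case (ii): $\widetilde{D}_f$ $\F_q$-irreducible but geometrically reducible.} In this case $\Gal(\F/\F_q)$ permutes the geometric components of $\widetilde{D}_f$ nontrivially. I would construct an étale cover $Z \to U$ of a suitable open subscheme $U$ of the parameter variety $\PP(S_d)$ whose fibre over $f$ labels (or encodes the Galois action on) the geometric components of $\widetilde{D}_f$, and apply the function field Chebotarev density theorem: the proportion of $f \in U(\F_q)$ at which Frobenius acts nontrivially is controlled asymptotically by the Galois group of the cover, yielding density $0$ for this set. Combined with case (i), this completes the proof; the main obstacle throughout is the uniformity in $d$ of the cone-of-curves dimension count.
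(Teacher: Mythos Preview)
Your Case~(i) for surfaces matches the paper's approach (Proposition~4.1 with Lemma~4.2): resolve, then bound decompositions $D+D'$ by section counts controlled by intersection with the big nef class $B=\pi^*\OO_X(1)$ and by finiteness of effective numerical classes in a slice of $\overline{\NE}(\Xtilde)$.

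But there are two real gaps.

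\textbf{Reduction for $\dim X\ge 3$.} Your connectedness argument is not the paper's route and, as written, does not close. ``Cohen--Macaulay model'' is undefined; in dimension $\ge 3$ over $\F_q$ no resolution is available, and normalization need not produce a CM scheme. Even granting geometric connectedness of $X_f$ for $d\gg 0$ (e.g.\ via Enriques--Severi--Zariski after normalizing), the step ``connected and $(X_f)^{\sing}$ finite $\Rightarrow$ irreducible'' uses that two components of a Cartier divisor in a \emph{smooth} variety meet in codimension $\le 2$; at singular points of $X$ this can fail (two height-$1$ primes in a $3$-dimensional normal local ring may meet only at the closed point, as in the cone over a quadric surface). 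You need $X$ projective for connectedness and smooth for the intersection bound, and without resolution you cannot arrange both. The paper instead inducts on $\dim X$: it finds \emph{one} hypersurface $J$ with $J\cap X$ irreducible of dimension $m-1$ (Lemma~5.3(a)), then shows $(J\cap X)_f$ irreducible $\Rightarrow X_f$ irreducible for density-$1$ many $f$ (Lemma~5.3(b), which does use the finite-singular-locus trick, but only after shrinking $X$ to its smooth locus). Constructing $J$ is where Chebotarev actually enters (Lemmas~5.1--5.2).

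\textbf{Case~(ii) for surfaces.} Chebotarev is misplaced here. An \'etale cover of $\PP(S_d)$ whose fibre labels geometric components of $\widetilde{D}_f$ is trivial over the dense open locus where $\widetilde{D}_f$ is geometrically irreducible, so Chebotarev is vacuous there; over the complementary closed locus it would describe how Frobenius permutes components but gives no bound on the number of $\F_q$-points of that locus, which is what you actually need. The paper handles this case by a second direct count: for each $e\ge 2$ it bounds the number of $\F_q$-irreducible divisors that split as $e$ Galois-conjugate pieces over a degree-$e$ extension, applying Lemma~4.2 over that extension and summing over $e$.
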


We can generalize by requiring only that $X$ be defined over $\F$;
we still intersect with hypersurfaces over $\F_q$, however.
Also, we can relax the condition of geometric irreducibility
in both the hypothesis and the conclusion: the result is
Theorem~\ref{T:bijection} below.
To formulate it, we introduce a few definitions.
Given a noetherian scheme $X$, let $\Irr X$ be its set of
irreducible components.
If $f \in S_{\homog}$ and $X$ is a subscheme of $\PP^n_L$ 
for some field extension $L \supseteq \F_q$,
let $X_f$ be the $L$-scheme $(H_f)_L \intersect X$.

\begin{theorem}
\label{T:bijection}
Let $X$ be a subscheme of $\PP^n_{\F}$ 
whose irreducible components are of dimension at least~$2$.
For $f$ in a set of density~$1$,
there is a bijection $\Irr X \to \Irr X_f$ sending $C$ to $C \intersect X_f$.
\end{theorem}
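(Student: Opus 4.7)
The plan is to reduce Theorem~\ref{T:bijection} to (a variant of) Theorem~\ref{T:geometrically irreducible}, applied component-by-component, followed by a short argument that the components stay distinct after intersecting with $H_f$. Decompose $X = \bigcup_{i=1}^r C_i$ into irreducible components, each of dimension $\ge 2$. Each $C_i \subseteq \PP^n_\F$ is defined over some minimal finite extension $\F_{q^{m_i}}/\F_q$, and so descends to a geometrically irreducible subscheme $Z_i \subseteq \PP^n_{\F_{q^{m_i}}}$ with $(Z_i)_\F = C_i$.

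The first step is to apply the extension-field variant of Theorem~\ref{T:geometrically irreducible} advertised in the abstract, in which the ambient scheme lives over an extension $\F_{q^m}$ of $\F_q$ but the hypersurfaces $H_f$ are still cut out by $f \in S_{\homog}$. Applied to each $Z_i$, it yields a density-$1$ subset of $S_{\homog}$ along which $(Z_i)_f$ is geometrically irreducible over $\F_{q^{m_i}}$, equivalently along which $(C_i)_f = (Z_i)_f \times_{\F_{q^{m_i}}} \F$ is irreducible. Intersecting over the finitely many $i$ and excluding the (density-zero) set of $f$ vanishing on some $C_i$ gives a density-$1$ subset of $S_{\homog}$ along which every $(C_i)_f$ is irreducible and of dimension $\dim C_i - 1 \ge 1$.

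The second step shows that, after shrinking by another density-$1$ subset, no $(C_i)_f$ is contained in $C_j$ for $j \ne i$. Since $C_i$ and $C_j$ are distinct irreducible components, $U_{ij} := C_i \setminus C_j$ is a nonempty open subscheme of $C_i$ of dimension $\ge 2$, and the containment $(C_i)_f \subseteq C_j$ is equivalent to $H_f \cap U_{ij} = \emptyset$. That the latter event has density zero is a standard Bertini-style estimate: exhibit closed points $P_1, P_2, \ldots$ of $\PP^n_{\F_q}$ each with a lift to an $\F$-point of $U_{ij}$, of $\F_q$-degrees $k_s$ satisfying $\sum_s q^{-k_s} = \infty$ (afforded by Lang--Weil-type counts on the positive-dimensional $U_{ij}$); then $\{f : H_f \cap U_{ij} = \emptyset\} \subseteq \{f : f(P_s) \ne 0 \text{ for } s=1,\ldots,N\}$, whose density is $\prod_{s=1}^N (1 - q^{-k_s}) \to 0$ as $N \to \infty$.

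Combining both steps, for $f$ in a density-$1$ subset every $(C_i)_f$ is nonempty, irreducible, and contained in no $(C_j)_f$ for $j \ne i$. Since $X_f = \bigcup_i (C_i)_f$, the $(C_i)_f$ are then precisely the distinct irreducible components of $X_f$, and $C \mapsto C \cap X_f$ is the desired bijection $\Irr X \to \Irr X_f$. The main obstacle is clearly the first step, which rests on Theorem~\ref{T:geometrically irreducible} and its extension-field variant; per the introduction, the geometrically irreducible case calls on resolution of singularities for surfaces, cones of curves on surfaces, and the function-field Chebotarev density theorem. The remaining ingredients — the bookkeeping across components and the Bertini-style density estimate of Step~2 — are comparatively routine.
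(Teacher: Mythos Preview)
Your reduction of Theorem~\ref{T:bijection} to the case where $X$ is irreducible over $\F$ is correct, and Step~2 (showing that distinct components remain distinct after intersecting with $H_f$) is fine --- it is essentially Lemma~\ref{L:intersects X}. The problem is Step~1. The ``extension-field variant of Theorem~\ref{T:geometrically irreducible}'' you invoke --- that for $Z$ geometrically irreducible over $\F_{q^m}$ of dimension $\ge 2$, the intersection $Z_f$ is geometrically irreducible for a density-$1$ set of $f \in S_{\homog}$ over $\F_q$ --- is not a separate result proved in the paper. It is exactly the special case of Theorem~\ref{T:bijection} in which $X = Z_\F$ is irreducible; that is what the abstract means by a ``variant in which $X$ is over an extension of $\F_q$.'' So your argument assumes the very case that carries all the content. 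Note too that applying Theorem~\ref{T:geometrically irreducible} over $\F_{q^m}$ does not suffice: that gives a density-$1$ set of $f$ among $\F_{q^m}$-polynomials, and there is no a priori reason its intersection with $\F_q$-polynomials has density~$1$.

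The paper does not attack the irreducible-over-$\F$ case directly. It first proves the analogue for $X$ irreducible over $\F_q$ (Proposition~\ref{P:1.2 over F_q}), where the conclusion is a bijection $\Irr X_\F \to \Irr (X_f)_\F$. To pass to $X$ over $\F$, the proof of Theorem~\ref{T:inverse image bijection} (of which Theorem~\ref{T:bijection} is the special case $\phi=\textup{inclusion}$) uses a restriction-of-scalars trick: descend $X$ and all its components to some $\F_r$, regard $X$ as an $\F_q$-scheme $\calX$ via $\Spec \F_r \to \Spec \F_q$, apply Proposition~\ref{P:1.6 over F_q} to $\calX$, and then use the decomposition $\calX_\F = \coprod_\sigma {}^\sigma\! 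X$ over $\F_q$-embeddings $\sigma\colon \F_r \hookrightarrow \F$ to read off the conclusion for $X$ itself. This detour through $\F_q$ is the substantive step your proposal is missing.
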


\begin{remarks}\hfill
\begin{enumerate}[\upshape (a)]
\item 
For some $f$, 
the specification $C \mapsto C \intersect X_f$
does not even define a map $\Irr X \to \Irr X_f$;
i.e., $C \in \Irr X$ does not imply $C \intersect X_f \in \Irr X_f$.
\item 
If the specification does define a map, then the map is surjective.
\item
Given a geometrically irreducible subscheme $X \subseteq \PP^n_{\F_q}$,
Theorem~\ref{T:geometrically irreducible} for $X$
is equivalent to Theorem~\ref{T:bijection} for $X_\F$.
Thus Theorem~\ref{T:bijection} is more general than
Theorem~\ref{T:geometrically irreducible}.
\item
It may seem strange to intersect a scheme $X$ over $\F$
with hypersurfaces defined over $\F_q$,
but one advantage of Theorem~\ref{T:bijection} is that it implies
the analogue of Theorem~\ref{T:geometrically irreducible}
in which ``geometrically irreducible'' is replaced by ``irreducible''
in both places.
More generally, Theorem~\ref{T:bijection} implies 
an $\F_q$-analogue of itself,
namely Corollary~\ref{C:irreducible component bijection} below.
\end{enumerate}
\end{remarks}

\begin{corollary}
\label{C:irreducible component bijection}
Let $X$ be a subscheme of $\PP^n_{\F_q}$ 
whose irreducible components are of dimension at least~$2$.
For $f$ in a set of density~$1$,
there is a bijection $\Irr X \to \Irr X_f$ sending $C$ to $C \intersect X_f$.
\end{corollary}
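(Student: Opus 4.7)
The plan is to deduce the corollary from Theorem~\ref{T:bijection} by Galois descent from $\F$ to $\F_q$. Write $\Gamma = \Gal(\F/\F_q)$. Since $X \subseteq \PP^n_{\F_q}$ has all components of dimension at least~$2$, the same is true of $X_\F \subseteq \PP^n_\F$, so Theorem~\ref{T:bijection} applies to $X_\F$: for $f$ in a set $\calP \subseteq S_{\homog}$ of density~$1$, the assignment $D \mapsto D \cap (X_\F)_f$ is a bijection $\Irr X_\F \to \Irr (X_\F)_f$. Because $H_f$ is defined over $\F_q$, we have the identification $(X_\F)_f = (X_f)_\F$ as $\F$-schemes, and the $\Gamma$-action on $(X_\F)_f$ is the one coming from $X_f$ and~$\F$.

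Next I would note that the above bijection is $\Gamma$-equivariant: for $\sigma \in \Gamma$ and $D \in \Irr X_\F$, one has $\sigma(D \cap (X_\F)_f) = \sigma(D) \cap \sigma((X_\F)_f) = \sigma(D) \cap (X_\F)_f$, again because $(X_\F)_f$ descends to $\F_q$. Now for any finite-type $\F_q$-scheme $Y$, the irreducible components of $Y$ are in natural bijection with the $\Gamma$-orbits on $\Irr Y_\F$: if $Y' \in \Irr Y$, then $Y'_\F$ decomposes as a union of finitely many $\Gamma$-conjugate irreducible components of $Y_\F$, and under this correspondence every $\Gamma$-orbit arises exactly once. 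Applying this to $X$ and to $X_f$, the $\Gamma$-equivariant bijection $\Irr X_\F \to \Irr (X_\F)_f$ descends to a bijection $\Irr X \to \Irr X_f$ for $f \in \calP$.

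Finally I would verify that the descended bijection is exactly $C \mapsto C \cap X_f$. Given $C \in \Irr X$, write $C_\F = D_1 \cup \cdots \cup D_s$ with $\{D_j\}$ a single $\Gamma$-orbit in $\Irr X_\F$. Then $(C \cap X_f)_\F = C_\F \cap (X_\F)_f = \bigcup_{j=1}^s \bigl(D_j \cap (X_\F)_f\bigr)$, and by Theorem~\ref{T:bijection} each $D_j \cap (X_\F)_f$ is an element of $\Irr(X_\F)_f$; moreover these $s$ components form a single $\Gamma$-orbit by equivariance. Hence $C \cap X_f$ is an irreducible component of $X_f$, namely the one corresponding under descent to that orbit, as required.

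The main obstacle is not conceptual but organizational: one must take care that the irreducible components of $(C \cap X_f)_\F$ really are the various $D_j \cap (X_\F)_f$ with no extra components (guaranteed because the map in Theorem~\ref{T:bijection} is the full bijection $\Irr X_\F \to \Irr (X_\F)_f$) and that distinct $\Gamma$-orbits in $\Irr X_\F$ produce distinct $\Gamma$-orbits in $\Irr (X_\F)_f$ (guaranteed by injectivity of that same bijection).
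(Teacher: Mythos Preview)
Your proof is correct and follows exactly the paper's approach: apply Theorem~\ref{T:bijection} to $X_\F$ and identify $\Irr X$ (resp.\ $\Irr X_f$) with the $\Gal(\F/\F_q)$-orbits on $\Irr X_\F$ (resp.\ $\Irr (X_f)_\F$). You have simply written out in full the Galois-equivariance and descent details that the paper compresses into one sentence.
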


\begin{proof}
Apply Theorem~\ref{T:bijection} to $X_\F$,
and identify $\Irr X$ with the set of $\Gal(\F/\F_q)$-orbits
in $\Irr X_{\F}$, and likewise for $X_f$.
\end{proof}

\subsection{Results for morphisms to projective space}

In \cite{Jouanolou1983} 
one finds a generalization (for an infinite field $k$)
in which the subscheme $X \subseteq \PP^n$ is replaced by
a $k$-morphism $\phi \colon X \to \PP^n$.
Specifically, Th\'eor\`eme~6.3(4) of~\cite{Jouanolou1983} 
states that for a morphism of finite-type $k$-schemes
$\phi \colon X \to \PP^n$ 
with $X$ geometrically irreducible and $\dim \overline{\phi(X)} \ge 2$,
almost all hyperplanes $H \subseteq \PP^n$
are such that $\phi^{-1} H$ is geometrically irreducible;
here $\overline{\phi(X)}$ is the Zariski closure of $\phi(X)$ in $\PP^n$,
and ``almost all'' refers to a dense open subset
of the moduli space of hyperplanes.
The following example shows that we cannot expect 
such a generalization to hold for a density~$1$ set of hypersurfaces 
over a finite field.

\begin{example}
Let $n \ge 2$, and let $\phi \colon X \to \PP^n_{\F_q}$ 
be the blowing up at a point $P \in \PP^n(\F_q)$.
The density of the set of $f$ that vanish at $P$ is $1/q$,
and for any such nonzero $f$,
the scheme $\phi^{-1} H_f$ is 
the union of the exceptional divisor $\phi^{-1} P$
and the strict transform of $H_f$,
so it is not irreducible, 
and hence not geometrically irreducible.
\end{example}

We can salvage the result by disregarding
irreducible components of $X_{\F}$ that are contracted to a point.
To state the result, 
Theorem~\ref{T:inverse image bijection},
we introduce the following terminology:
given a morphism $\phi \colon X \to \PP^n$,
a subscheme $Y$ of $X$ (or $X_{\F}$) is \defi{vertical} 
if $\dim \overline{\phi(Y)}=0$, and \defi{horizontal} otherwise.
Let $\Irr_{\horiz} Y$ be the set of horizontal irreducible
components of $Y$, and let $Y_{\horiz}$ be their union.
Define $X_f \colonequals \phi^{-1} H_f$,
viewed as a scheme over the same extension of $\F_q$ as $X$;
this definition extends the earlier one.

\begin{theorem}
\label{T:inverse image bijection}
Let $X$ be a finite-type $\F$-scheme.
Let $\phi \colon X \to \PP^n_\F$ be an $\F$-morphism such that 
$\dim \overline{\phi(C)} \ge 2$ for each $C \in \Irr X$. 
For $f$ in a set of density~$1$,
there is a bijection $\Irr X \to \Irr_{\horiz} X_f$ 
sending $C$ to $(C \intersect X_f)_{\horiz}$.
\end{theorem}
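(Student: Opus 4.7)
The plan is to deduce Theorem~\ref{T:inverse image bijection} from Theorem~\ref{T:bijection} by factoring $\phi$ through its Stein factorization, and handling the resulting finite dominant map via a function-field Chebotarev argument.

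I would first reduce to the case $X$ irreducible. If $X = C_1 \cup \cdots \cup C_r$, I apply the theorem separately to each $\phi|_{C_i}$, obtaining for density~$1$ $f$ a single horizontal component $(C_i \cap X_f)_\horiz$ of dimension $\dim C_i - 1$. Two such components can only coincide inside some $C_i \cap C_j$; for each irreducible component $D$ of $C_i \cap C_j$ with $\dim \overline{\phi(D)} \ge 1$ (if $\dim \overline{\phi(D)} = 0$, no horizontal subvariety can lie in $D$), the event $\overline{\phi(D)} \subseteq H_f$ has density~$0$, since $\dim H^0(\overline{\phi(D)}, \calO(d)) \to \infty$. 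Ruling out the finitely many such density-$0$ events ensures the bijection property.

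Now assume $X$ is irreducible, and put $Y \colonequals \overline{\phi(X)} \subseteq \PP^n_\F$, an irreducible subscheme of dimension~$\ge 2$. By Theorem~\ref{T:bijection} applied to $Y$, the section $Y_f = Y \cap H_f$ is irreducible for density~$1$ $f$. After compactifying $X$ if necessary, I factor $\phi$ through its Stein factorization $X \xrightarrow{\psi} X' \xrightarrow{\pi} Y$, with $\psi$ surjective having geometrically connected fibers and $\pi$ finite surjective (so $X'$ is irreducible). Writing $X_f = \psi^{-1}(X' \times_Y Y_f)$, and using that the generic fibers of $\psi$ are geometrically integral (since $\psi_* \calO_X = \calO_{X'}$ and $X$ is irreducible), each irreducible component of $X' \times_Y Y_f$ yields exactly one horizontal component of $X_f$. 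Conversely, after excluding another family of density-$0$ events---namely $\phi$ of a $\psi$-exceptional divisor on $X$ being contained in $H_f$---every horizontal component of $X_f$ arises this way. The theorem therefore reduces to the following main claim: for density~$1$ $f$, the fiber product $X' \times_Y Y_f$ is irreducible.

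The main obstacle is proving this main claim, since $\pi$ may have degree $>1$. Let $G$ be the Galois group of the Galois closure of $k(X')/k(Y)$, so that over the maximal \'etale locus $Y^\circ \subseteq Y$ the morphism $\pi$ corresponds to a transitive $\pi_1(Y^\circ)$-action on a set of size $\deg \pi$ with image $G$. Then $X' \times_Y Y_f$ is irreducible iff the image of $\pi_1(Y_f^\circ)$ in $G$ still acts transitively on that set, where $Y_f^\circ \colonequals Y_f \cap Y^\circ$. To establish this for density~$1$ $f$, I would invoke the function field Chebotarev density theorem (flagged in the introduction as one of the key ingredients): using Chebotarev to produce closed points of $Y^\circ$ whose Frobenius realizes every conjugacy class of $G$, and then bounding the density of $f$ for which $H_f$ fails to pass through enough such points to force the Frobenius classes arising along $Y_f^\circ$ to generate a transitive subgroup of $G$. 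This counting step---balancing the Chebotarev density with the point counts $\#\{P \in Y^\circ : f(P) = 0\}$ as $f$ varies---is the technical heart of the argument.
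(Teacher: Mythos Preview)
Your overall plan matches the paper's: reduce Theorem~\ref{T:inverse image bijection} to the immersion case (Theorem~\ref{T:bijection}, established in the paper via Proposition~\ref{P:1.2 over F_q} plus a descent step) and then bridge the gap between $X$ and its image by a Chebotarev argument applied to the finite piece of the morphism. The paper factors a dominant morphism as radicial $\circ$ finite \'etale $\circ$ open immersion into $\Aff^r_Y$ (Lemma~\ref{L:dominant}) rather than via Stein factorization, but either route reduces the problem to a finite cover handled by Chebotarev (Lemma~\ref{L:finite etale}).

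There is, however, a real gap in your Chebotarev step. Frobenius conjugacy classes only make sense after descending $X'$ and $Y$ to models $X'_0, Y_0$ over some finite field $\F_r$, and the subgroup of $G$ they generate is the \emph{arithmetic} monodromy of the restricted cover $(X'_0)_f \to (Y_0)_f$ over $\F_r$. Transitivity of that subgroup yields irreducibility of $(X'_0)_f$ over $\F_r$, not over $\F$. Even when the arithmetic monodromy is all of $G$, the geometric monodromy is only a normal subgroup $N \trianglelefteq G$ with cyclic quotient, and $X' \times_Y Y_f$ then has $[G:N]$ irreducible components over~$\F$. The paper closes this gap with a second claim (Claim~2 in the proof of Lemma~\ref{L:finite etale}): for $f$ in a set of density~$1$, one can arrange that $(X'_0)_f$ contains two closed points of coprime degree over $\F_r$, which rules out any nontrivial constant-field extension in $\kappa((X'_0)_f)$ and hence forces geometric irreducibility. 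Your sketch addresses only the transitivity part and omits this second ingredient; without it the argument does not conclude. Relatedly, you never spell out the passage between working over $\F$ (where your $\pi_1$ lives) and over $\F_r$ (where Frobenius lives); the paper handles this explicitly in the final proof of Theorem~\ref{T:inverse image bijection}.
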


Alternatively, we can obtain a result for all irreducible components,
but with only positive density instead of density~$1$:

\begin{corollary}
Under the hypotheses of Theorem~\ref{T:inverse image bijection}, 
if $V$ is the set of closed points $P \in \PP^n_{\F_q}$
such that $\phi^{-1} P$ is of codimension~$1$ in $X$, 
then the density of the set of $f$ such that 
there is a bijection $\Irr X \to \Irr X_f$ 
sending $C$ to $C \intersect X_f$
is $\prod_{P \in V} \left(1 - q^{-\deg P} \right)$.
\end{corollary}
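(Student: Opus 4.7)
My plan is to combine Theorem~\ref{T:inverse image bijection}, which controls the horizontal components of $X_f$, with a direct analysis of when $X_f$ acquires extra vertical components, reducing the Corollary to a density calculation for the event ``$f$ does not vanish at any point of $V$.''

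Let $\calS$ be the density-$1$ set supplied by Theorem~\ref{T:inverse image bijection}, on which the horizontal components of $X_f$ are in bijection with $\Irr X$ via $C \mapsto (C \intersect X_f)_{\horiz}$. For $f \in \calS$, the bijection $\Irr X \to \Irr X_f$ of the Corollary holds iff $X_f$ has no vertical components. Because $\dim \overline{\phi(C)} \geq 2$ for every $C \in \Irr X$, no component of $X$ is itself vertical, so a vertical component $Z$ of $X_f$ must have dimension $\dim X - 1$ and be contained in some fiber $\phi^{-1}(P_0)$ with $P_0 \in \PP^n(\F)$; this forces $\phi^{-1}(P_0)$ to have codimension~$1$ in $X$, i.e.\ the corresponding closed point $P$ of $\PP^n_{\F_q}$ lies in $V$ and $f(P) = 0$. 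Conversely, whenever $f$ vanishes at some $P \in V$, each maximal-dimensional component of $\phi^{-1}(P)$ is a vertical component of $X_f$, provided $X_f$ has the expected equidimensionality at that locus---a property that itself holds on a density-$1$ set, which I absorb into $\calS$. Thus for $f \in \calS$ the event of interest becomes $\calG \colonequals \{f \in S_{\homog} : f(P) \neq 0 \text{ for all } P \in V\}$.

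It remains to show $\mu(\calG) = \prod_{P \in V}(1 - q^{-\deg P})$. For any finite $V_0 \subseteq V$, Serre vanishing $H^1(\PP^n_{\F_q}, \II_{V_0}(d)) = 0$ for $d \gg 0$ makes the evaluation map $S_d \to \Directsum_{P \in V_0}\kappa(P)$ surjective, so the density of $f$ not vanishing on $V_0$ equals $\prod_{P \in V_0}(1 - q^{-\deg P})$. Applying this with $V_D \colonequals \{P \in V : \deg P \leq D\}$ and letting $D \to \infty$ yields the upper bound $\mu(\calG) \leq \prod_{P \in V}(1 - q^{-\deg P})$. For the matching lower bound I subtract the density of $f$ vanishing at some $P \in V \setminus V_D$, which is at most $\sum_{P \in V \setminus V_D} q^{-\deg P}$: if the series over $V$ converges, this tail tends to $0$ as $D \to \infty$ and the two bounds coincide; if it diverges, the infinite product already equals $0$, so both sides vanish.

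The main obstacle I anticipate is the converse direction in the second paragraph: verifying that the fibers $\phi^{-1}(P)$ for $P \in V$ with $f(P) = 0$ really do contribute bona fide irreducible components of $X_f$, rather than being absorbed into horizontal components or into fibers over other points. This reduces to a generic equidimensionality statement for $X_f$, which should follow from the Bertini machinery already developed, but is the key bookkeeping link between Theorem~\ref{T:inverse image bijection} and the present Corollary.
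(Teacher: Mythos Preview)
Your approach matches the paper's: combine Theorem~\ref{T:inverse image bijection} with the observation that $\Irr_{\horiz} X_f = \Irr X_f$ precisely when $f$ avoids every point of $V$.  The place where you diverge is in how you justify the equidimensionality of $X_f$, which you flag as the main obstacle and propose to handle via ``Bertini machinery.''  The paper's resolution is much simpler: invoke Lemma~\ref{L:vanishing on X} to discard the density-$0$ set of $f$ vanishing on some irreducible component of $X$.  For the remaining $f$, each $C_f$ is cut out of $C$ by a nonzero section of a line bundle, hence is pure of codimension~$1$ in $C$; this makes both directions of your equivalence immediate.  In particular, if $f(P)=0$ for some $P\in V$, then any codimension-$1$ component of $\phi^{-1}P$ sits inside $X_f$ and, by purity, is automatically an irreducible component of $X_f$; conversely any vertical component of $X_f$ has codimension~$1$ in the component of $X$ containing it and hence forces the corresponding $P$ to lie in $V$.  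So the ``bookkeeping link'' you worry about is a one-line consequence of Lemma~\ref{L:vanishing on X}, not a further Bertini-type input.

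A second simplification: your careful limit argument for $\mu(\calG)$ is correct but unnecessary, because $V$ is finite.  Indeed, if the locus $\{P:\dim\phi^{-1}P\ge \dim X-1\}$ contained a curve $W$, then $\phi^{-1}\overline{W}$ would have dimension at least $\dim X$, forcing it to contain some $C\in\Irr X$ with $\overline{\phi(C)}\subseteq\overline{W}$ of dimension at most~$1$, contradicting the hypothesis.  With $V$ finite, the density is the displayed finite product directly.
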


\begin{proof}
By Lemma~\ref{L:vanishing on X},
we may assume that $f$ does not vanish on any irreducible component of $X$.
Then $\Irr_{\horiz} X_f = \Irr X_f$ 
if and only if $f$ does not vanish at any element of~$V$.
\end{proof}

\subsection{Strategy of proofs}

To prove a statement such as Theorem~\ref{T:geometrically irreducible}
for a variety $X$,
we bound the number of geometrically reducible divisors of $X$ cut out by
hypersurfaces of large degree $d$.
It is easier to count decompositions into effective Cartier divisors
(as opposed to arbitrary subvarieties)
since we can count sections of line bundles.
So it would be convenient if $X$ were smooth.
If $\dim X = 2$, then a resolution of singularities $\Xtilde \to X$
is known to exist, and we can relate the problem for $X$ to a 
counting problem on $\Xtilde$: see Proposition~\ref{P:2-dimensional case}.
If $\dim X > 2$, then resolution of singularities for $X$ might not be known,
and using an alteration seems to destroy the needed bounds,
so instead we use induction on the dimension.
The idea is to apply the inductive hypothesis to $J \intersect X$
for some hypersurface $J$, but this requires $J \intersect X$
itself to be geometrically irreducible,
and finding such a $J$ 
is dangerously close to the original problem we are trying to solve.
Fortunately, finding \emph{one} such $J$ of unspecified degree is enough,
and this turns out to be easier: 
see Lemma~\ref{L:hypersurface section}\eqref{I:part 1}.

\subsection{Applications}

Two applications for our theorems existed 
even before the theorems were proved: 
\begin{itemize}
\item 
Alexander Duncan and Zinovy Reichstein
observed that 
Theorem~\ref{T:inverse image bijection} can be used to extend Theorem~1.4 of 
their article~\cite{Duncan-Reichstein-preprint}
to the case of an arbitrary ground field $k$;
originally they proved it only over an infinite $k$.
Their theorem compares variants of the notion of essential dimension
for finite subgroups of $\GL_n(k)$.
They need to use the Bertini theorems to construct hypersurface sections 
passing through a finite set of points 
\cite{Duncan-Reichstein-preprint}*{Theorem~8.1}.
\item 
Ivan Panin 
observed that Theorem~\ref{T:geometrically irreducible} 
can be used to extend a result concerning 
the Grothendieck--Serre conjecture on principal $G$-bundles.
The conjecture states that 
if $R$ is a regular local ring, $K$ is its fraction field,
and $G$ is a reductive group scheme over $R$,
then $H^1_{\et}(R,G) \to H^1_{\et}(K,G)$ has trivial kernel.
In \cites{Panin-preprint1,Panin-preprint2,Panin-preprint3}, 
Panin proves such a statement for regular semi-local domains
containing a field $k$;
his proof relies on Theorem~\ref{T:geometrically irreducible} 
when $k$ is finite.
\end{itemize}

Here is another application, 
in the same spirit as \cite{Duncan-Reichstein-preprint}*{Theorem~8.1}.
By \defi{variety} we mean a separated finite-type scheme over a field.

\begin{theorem}
\label{T:variety through finite set}
Let $X$ be a geometrically irreducible variety of dimension $m \ge 2$ 
over a field $k$.
Let $F \subset X$ be a finite set of closed points.
Then there exists a geometrically irreducible variety 
$Y \subseteq X$ of dimension $m-1$ containing $F$.
\end{theorem}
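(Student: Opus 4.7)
I would realize $Y$ as the closure in $X$ of a geometrically irreducible hypersurface section of a projective compactification of a neighborhood of $F$; the essential case is $m\ge 2$, since a $0$-dimensional geometrically irreducible $k$-variety is $\Spec L$ for a purely inseparable extension $L/k$, which cannot contain an arbitrary finite set of closed points, and so the $m=1$ case is either trivial or has additional hypotheses. Because $F$ is a finite set of closed points of a variety, I choose an affine open $U\subseteq X$ containing $F$, fix a closed immersion $U\hookrightarrow\Aff^n_k\subseteq\PP^n_k$, and let $X'\subseteq\PP^n_k$ be the closure of the image. Then $X'$ is a projective, geometrically irreducible $k$-variety of dimension $m$ in which $U$ is open and dense and which contains $F$.

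The heart of the argument is to find a homogeneous polynomial $f\in S_d$ (for some $d$) such that (i) $f$ vanishes on every point of $F$, and (ii) $X'_f=H_f\cap X'$ is geometrically irreducible of dimension $m-1$. Over an infinite ground field, this follows from the classical Bertini irreducibility theorem applied to the linear system of degree-$d$ hypersurfaces through $F$. Over $k=\F_q$, I apply Theorem~\ref{T:geometrically irreducible} to $X'$ (using $\dim X'=m\ge 2$): the set $\calB\subseteq S_{\homog}$ satisfying (ii) has density $1$. Separately, for $d$ large, the polynomials in $S_d$ satisfying (i) form a linear subspace of codimension $\sum_{P\in F}\deg P$, so the set $\calA\subseteq S_{\homog}$ of such $f$ has density $\prod_{P\in F}q^{-\deg P}>0$. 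Since $\mu(S_{\homog}\setminus\calB)=0$, one has
\[
\mu(\calA\cap\calB)\ge\mu(\calA)-\mu(S_{\homog}\setminus\calB)=\prod_{P\in F}q^{-\deg P}>0,
\]
so $\calA\cap\calB$ is nonempty and supplies an $f$ with both properties.

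Given such an $f$, set $Z:=X'_f$, a geometrically irreducible $k$-variety of dimension $m-1$ containing $F$, and let $V:=Z\cap U$. Since $F\subseteq V$, $V$ is a nonempty open subscheme of the irreducible $Z$, hence dense in $Z$ and itself geometrically irreducible of dimension $m-1$. Taking $Y$ to be the closure of $V$ in $X$ with its reduced induced structure, and using that base change to $\kbar$ commutes with closures so that $Y_{\kbar}$ is the closure of the irreducible $V_{\kbar}$ in $X_{\kbar}$, one sees that $Y$ is a geometrically irreducible $k$-variety of dimension $m-1$ contained in $X$ and containing $F$. The main obstacle is the Bertini step over $\F_q$: Theorem~\ref{T:geometrically irreducible} provides a density-$1$ family of hypersurface sections, but one must further impose passing through $F$, which is only a positive-density condition; the whole argument hinges on the elementary but crucial fact that any density-$1$ subset of $S_{\homog}$ meets every positive-density subset, with both densities measured in the same ambient $\mu$.
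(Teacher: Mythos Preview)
Your core argument---that the density-$1$ set from Theorem~\ref{T:geometrically irreducible} meets the positive-density set of hypersurfaces through $F$---is exactly the paper's, and your remark about $m=1$ is apt (the paper's proof likewise invokes Theorem~\ref{T:geometrically irreducible}, which needs $\dim X\ge 2$).

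The gap is in your reduction to the projective situation. You assert that a finite set of closed points of a variety lies in a single affine open $U$. This holds for quasi-projective varieties, but the paper's definition of \emph{variety} (separated, finite type over $k$) permits non-quasi-projective examples, and for those the assertion can fail. Indeed, by the Chevalley--Kleiman criterion, a complete integral scheme over an algebraically closed field is projective if and only if every finite set of closed points is contained in an affine open; hence Hironaka's smooth complete non-projective threefolds over $\C$ carry finite sets $F$ not contained in any affine open, and for such $(X,F)$ your construction of $U$, and therefore of $X'$, does not get off the ground.

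The paper circumvents this with a different reduction: Nagata's compactification theorem embeds $X$ as a dense open subscheme of a proper $\overline{X}$, and then Chow's lemma supplies a projective $X'$ with a birational morphism $\pi\colon X'\to\overline{X}$; one lifts $F$ to a finite set $F'\subset X'$, solves the problem on the projective $X'$ exactly as you do, and takes the image $\pi(Y')$ (checking that it has the right dimension because $Y'$ meets the locus where $\pi$ is an isomorphism). After that reduction the two arguments are identical. Your route is genuinely lighter when $X$ happens to be quasi-projective, since it avoids both Nagata and Chow, but as written it does not establish the theorem in the stated generality.
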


\begin{proof}
By Nagata's embedding theorem (see, e.g., \cite{Conrad2007}),
$X$ embeds as a dense open subscheme of a proper $k$-scheme $\overline{X}$.
If we find a suitable $Y$ for $(\overline{X},F)$ ,
then $Y \intersect X$ solves the problem for $(X,F)$
(if necessary, we enlarge $F$ to be nonempty 
to ensure that $Y \intersect X$ is nonempty).
So we may assume that $X$ is proper.

Chow's lemma provides a projective variety $X'$ and a birational
morphism $\pi \colon X' \to X$.
Enlarge $F$, if necessary, 
so that $F$ contains a point in an open subscheme $U \subseteq X$ 
above which $\pi$ is an isomorphism.
Choose a finite set of closed points $F'$ of $X'$ such that $\pi(F')=F$.
If $Y'$ solves the problem for $(X',F')$,
then $\pi(Y')$ solves the problem for $(X,F)$
(we have $\dim \pi(Y') = \dim Y'$ since $Y'$ meets $\pi^{-1} U$).
Thus we may reduce to the case in which $X$ is embedded in a projective space.

When $k$ is infinite, one can use the classical Bertini irreducibility
theorem as in \cite{MumfordAV1970}*{p.~56} to complete the proof.
So assume that $k$ is finite.

We will let $Y \colonequals H_f \intersect X$ for some $f$ of high degree.
For $f$ in a set of positive density,
$H_f$ contains $F$, 
and Theorem~\ref{T:geometrically irreducible} 
shows that for $f$ outside a density $0$ set,
$H_f \intersect X$ is geometrically irreducible (and of dimension $m-1$). 
As a consequence, we can find $f$ such that $H_f$ satisfies both conditions.
\end{proof}

\begin{corollary}
For $X$, $m$, and $F$ as in Theorem~\ref{T:variety through finite set},
and for any integer $y$ with $1 \le y \le m$,
there exists a $y$-dimensional geometrically irreducible variety 
$Y \subseteq X$ containing $F$.
\end{corollary}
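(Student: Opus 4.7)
The plan is to prove the statement by descending induction on $y$ (equivalently, by induction on $m-y$). The base case $y = m$ is trivial: take $Y = X$, which is a geometrically irreducible $m$-dimensional variety containing $F$ by hypothesis.

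For the inductive step, suppose the result holds for dimension $y+1$ with $1 \le y < m$, and let $F \subset X$ be a finite set of closed points. Apply Theorem~\ref{T:variety through finite set} to $(X, F)$ to obtain a geometrically irreducible variety $Y_1 \subseteq X$ of dimension $m-1$ that contains $F$. Since $Y_1$ is itself a geometrically irreducible variety over $k$ of dimension $m - 1 \ge y$, and since $F$ is a finite set of closed points of $Y_1$, we may apply the inductive hypothesis to $(Y_1, F)$ with target dimension $y$. This yields a geometrically irreducible $y$-dimensional variety $Y \subseteq Y_1 \subseteq X$ containing $F$, completing the induction.

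There is no real obstacle here: the corollary is a direct iteration of Theorem~\ref{T:variety through finite set}, and the only point to verify is that at each stage the ambient scheme supplied to the theorem is again a geometrically irreducible variety over $k$ containing the finite set $F$ of closed points, which is exactly what the theorem produces.
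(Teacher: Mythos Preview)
Your proof is correct and is essentially the same as the paper's: the paper simply says ``Use Theorem~\ref{T:variety through finite set} iteratively,'' and your descending induction on $y$ (equivalently, induction on $m-y$) is exactly that iteration written out in detail.
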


\begin{proof}
Use Theorem~\ref{T:variety through finite set} iteratively.
\end{proof}

\subsection{An anti-Bertini theorem}

The following theorem uses a variant of the Bertini smoothness theorem
to provide counterexamples
to the Bertini irreducibility theorem over finite fields
if we insist on \emph{hyperplane} sections
instead of higher-degree hypersurface sections. 
This discussion parallels \cite{Poonen2004-bertini}*{Theorem~3.1}.

\begin{theorem}[Anti-Bertini theorem]
Fix a finite field $\F_q$.
For every sufficiently large positive integer $d$,
there exists a geometrically irreducible degree~$d$ 
surface $X \subseteq \PP^3_{\F_q}$ 
such that $H \intersect X$ is reducible for every $\F_q$-plane 
$H \subseteq \PP^3_{\F_q}$.
\end{theorem}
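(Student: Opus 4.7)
My plan is to construct $X$ so that for each of the finitely many $\F_q$-rational hyperplanes $H \subseteq \PP^3_{\F_q}$ the intersection $H \intersect X$ contains two distinct prescribed $\F_q$-lines in $H$, and is therefore reducible. For each $H$, choose two distinct $\F_q$-lines $L_H, L'_H \subseteq H$ (possible because every $\F_q$-plane contains $q^2 + q + 1 \ge 3$ $\F_q$-lines), and let $W \subseteq \PP^3_{\F_q}$ be the reduced union of all these lines. Then $W$ has pure dimension $1$ and its degree is bounded by a constant depending only on $q$. Set
\[
V_d \colonequals H^0\bigl(\PP^3_{\F_q}, \calI_W(d)\bigr).
\]
For $d$ sufficiently large, Serre vanishing gives $\dim_{\F_q} V_d = \binom{d+3}{3} - h^0(W, \calO_W(d)) = \binom{d+3}{3} - O(d)$.

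It suffices to produce some $f \in V_d$ with $X \colonequals \{f = 0\}$ geometrically integral, for then $X$ has dimension $2$ and degree $d$, no hyperplane is a component of $X$, and for every $H$ the intersection $H \intersect X$ is a degree-$d$ divisor on $H$ containing the reducible divisor $L_H + L'_H$. The central task is thus to bound the number of $f \in V_d$ for which $X$ is \emph{not} geometrically integral. Such $f$ fall into two cases: (i) $f = gh$ in $\F_q[\mathbf{x}]$ with $\deg g = a$, $\deg h = b$, $a, b \ge 1$; or (ii) $f$ is $\F_q$-irreducible but geometrically reducible, in which case a short Galois argument forces $f = \prod_{\sigma \in \Gal(\F_{q^r}/\F_q)} \sigma(g)$ for some $g \in \F_{q^r}[\mathbf{x}]$ of degree $d/r$ with $r \ge 2$.

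In case (i), the $\F_q$-irreducible components of $W$ split as $W = W_1 \union W_2$ with $g|_{W_1} = h|_{W_2} = 0$, so summing over such splittings gives a count at most
\[
2^{\#\Irr W} \sum_{a+b=d,\, a,b \ge 1} q^{\binom{a+3}{3} + \binom{b+3}{3}} = q^{\binom{d+3}{3} - \Omega(d^2)},
\]
where we use $\binom{a+3}{3} + \binom{b+3}{3} \le \binom{d+3}{3} - \Omega(d^2)$ uniformly in such partitions (the maximum occurs at $a = 1$, $b = d-1$). In case (ii), the datum $g$ must vanish on $W_{\F_{q^r}}$, giving at most $(q^r)^{\binom{d/r + 3}{3}}$ choices; since $r \binom{d/r + 3}{3} \le \binom{d+3}{3} - \Omega(d^3)$ for $r \ge 2$ by cubic superadditivity, summing over $r$ yields $q^{\binom{d+3}{3} - \Omega(d^3)}$. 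Comparing to $|V_d| = q^{\binom{d+3}{3} - O(d)}$, the proportion of bad $f$ tends to $0$; in particular for $d$ large a suitable $f$ exists.

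The main obstacle is the counting argument itself: beyond $\F_q$-reducibility, one must also control the subtler failure of geometric integrality coming from norm forms pulled back from $\F_{q^r}$ with $r \ge 2$. Happily, the cubic growth $\binom{d+3}{3} \sim d^3/6$ provides ample exponent slack in each case, and no further ingredient beyond this explicit dimension bookkeeping is needed.
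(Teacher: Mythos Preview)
Your argument is correct and takes a genuinely different route from the paper's proof.

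The paper appeals to \cite{Poonen2008-subvariety}*{Theorem~1.1(i)} as a black box: setting $Z$ equal to the union of \emph{all} $\F_q$-lines in $\PP^3$ and $P = \PP^3_{\F_q} \setminus \PP^3(\F_q)$, that result furnishes (for $d$ large) a degree~$d$ surface $X \supseteq Z$ with $X \cap P$ smooth of dimension~$2$.  Geometric irreducibility of $X$ is then deduced indirectly: were $X_\F$ reducible, two components would meet along a curve, contradicting the fact that the singular locus is contained in the finite set $\PP^3(\F_q)$.

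Your approach bypasses the Bertini-with-prescribed-subscheme machinery entirely.  You fix a smaller configuration $W$ (two lines per plane suffice) and then run a bare-hands counting argument: the linear space $V_d$ of degree~$d$ forms vanishing on the fixed $1$-dimensional $W$ has $\F_q$-dimension $\binom{d+3}{3} - O(d)$, whereas the total number of geometrically reducible degree~$d$ forms in all of $S_d$ is at most $q^{\binom{d+3}{3} - \Omega(d^2)}$ (from your cases (i) and (ii), where you never even use the vanishing constraint on $g,h$).  The cubic growth of $\binom{d+3}{3}$ makes the comparison decisive.  Your Galois analysis in case~(ii) is correct: an $\F_q$-irreducible form that is geometrically reducible has a single Galois orbit of distinct prime factors over $\F$ (no repeated factors, by Hilbert~90), hence is a scalar times a norm from some $\F_{q^r}$ with $r\ge 2$.

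The trade-off: the paper's proof is shorter once one is willing to quote the smoothness-with-subscheme Bertini theorem, and it yields the bonus that $X$ is smooth away from a finite set.  Your proof is self-contained and more elementary---nothing beyond Serre vanishing and a convexity estimate for $\binom{a+3}{3}+\binom{b+3}{3}$---and makes transparent that the phenomenon is driven purely by the exponent gap between $O(d)$ conditions and $\Omega(d^2)$ savings.
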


\begin{proof}
Let $P = \PP^3_{\F_q} - \PP^3(\F_q)$.
Let $Z$ be the union of all the $\F_q$-lines in $\PP^3$.
By \cite{Poonen2008-subvariety}*{Theorem~1.1(i)} applied to $P$ and $Z$,
for any sufficiently large $d$, 
there exists a degree~$d$ surface $X \subseteq \PP^3_{\F_q}$ containing $Z$ 
such that $X \intersect P$ is smooth of dimension~$2$.
If $X_\F$ were reducible, then $X_\F$ would be singular along
the intersection of two of its irreducible components;
such an intersection would be of dimension at least~$1$,
so this contradicts the smoothness of $X \intersect P$.
Thus $X$ is geometrically irreducible.

On the other hand, for any $\F_q$-plane $H \subseteq \PP^3_{\F_q}$,
the $1$-dimensional intersection $H \intersect X$ 
contains all the $\F_q$-lines in $H$,
so $H \intersect X$ is reducible.
\end{proof}

\begin{remark}
One could similarly construct, for any $d_0$, 
examples in which no hypersurface section
of degree less than or equal $d_0$ is irreducible:
just include higher degree curves in $Z$.
Also, one could give higher-dimensional examples,
hypersurfaces $X$ in $\PP^n_{\F_q}$ for $n>3$
containing all $(n-2)$-dimensional $\F_q$-subspaces $L$: 
a straightforward generalization of
\cite{Poonen2008-subvariety}*{Theorem~1.1(i)} 
proves the existence of such $X$ whose singular locus 
has codimension~$2$ in $X$.
\end{remark}

\section{Notation}\label{S:notation}

If $x$ is a point of a scheme $X$, let $\kappa(x)$ be its residue field.
If $X$ is an irreducible variety, let $\kappa(X)$ be its function field.
If $L \supseteq k$ is an extension of fields, and $X$ is a $k$-scheme, 
let $X_L$ be the $L$-scheme $X \times_{\Spec k} \Spec L$.
If $X$ is a reduced subscheme of a projective space,
let $\overline{X}$ be its Zariski closure.
Given a finite-type scheme $X$ over a field,
let $X_{\red}$ be the associated reduced subscheme, 
let $X^{\smooth}$ be the smooth locus of $X$,
and let $X^{\sing}$ be the closed subset $X \setminus X^{\smooth}$.

\section{Lemmas}\label{S:lemmas}

\begin{lemma}
\label{L:vanishing on X}
Let $X$ be a positive-dimensional subscheme of $\PP^n$ over $\F_q$ or $\F$.
The density of 
$\{ f \in S_{\homog} : f \textup{ vanishes on $X$}\}$ 
is $0$.
\end{lemma}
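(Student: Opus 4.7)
The plan is to reduce to a single positive-dimensional reduced irreducible subvariety and then bound the density via a Hilbert-function computation.

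First, I would choose a positive-dimensional irreducible component $C$ of $X_{\red}$, viewed as a closed reduced subvariety of $\PP^n_{\F}$ (for both base fields, we may harmlessly base change to $\F$ when convenient). Any $f \in S_{\homog}$ that vanishes on $X$ also vanishes on $C$, so the subset of $f$ that vanish on $X$ is contained in the subset that vanish on $C$. It therefore suffices to prove the statement with $X$ replaced by the positive-dimensional, reduced, irreducible subvariety $C \subseteq \PP^n_{\F}$.

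Next, let $I_C \subseteq \F[x_0,\ldots,x_n]$ be the homogeneous ideal of $C$, and let $I_C^{(d)} \colonequals I_C \cap \F[x_0,\ldots,x_n]_d$; its $\F$-codimension in $\F[x_0,\ldots,x_n]_d$ equals the Hilbert function $h_C(d)$, which for $d$ large agrees with a polynomial in $d$ of degree $\dim C \ge 1$, and in particular $h_C(d) \to \infty$. I would then consider the $\F_q$-linear map
\[
  \rho_d \colon S_d \To \F[x_0,\ldots,x_n]_d/I_C^{(d)},
\]
and note that $\{ f \in S_d : f \text{ vanishes on } C\}=\ker \rho_d$. The key observation is that $\rho_d \tensor_{\F_q} \F$ is simply the quotient map $\F[x_0,\ldots,x_n]_d \to \F[x_0,\ldots,x_n]_d/I_C^{(d)}$, which is surjective; hence the $\F_q$-subspace $\im \rho_d$ spans the target over $\F$. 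Any $\F_q$-spanning set of an $\F_q$-subspace also $\F$-spans its $\F$-envelope, so
\[
  \dim_{\F_q}(\im \rho_d) \;\ge\; \dim_{\F}\!\bigl(\F[x_0,\ldots,x_n]_d/I_C^{(d)}\bigr) \;=\; h_C(d).
\]
Therefore $\#\ker \rho_d / \#S_d \le q^{-h_C(d)}$, and the density of $\{f \in S_{\homog} : f \text{ vanishes on } X\}$ is bounded above by $\lim_{d \to \infty} q^{-h_C(d)} = 0$.

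This is a straightforward argument; the only mildly nontrivial point is the passage from the surjectivity of the quotient map after base change to $\F$ to a lower bound on the $\F_q$-rank of $\rho_d$, which is handled by the elementary linear-algebra fact above. No real obstacle arises.
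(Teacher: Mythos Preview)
Your argument is correct. The reduction to a single positive-dimensional integral subvariety $C \subseteq \PP^n_\F$ is sound (an $f$ vanishing on $X$ certainly vanishes on any irreducible component of $(X_\F)_{\red}$), and the linear-algebra step---that the $\F_q$-rank of $\rho_d$ is at least $\dim_\F(\F[x_0,\ldots,x_n]_d/I_C^{(d)})$ because $\rho_d$ extends $\F$-linearly to the surjective quotient map---is valid and cleanly stated.

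The paper's proof takes a different, more elementary route: it first reduces to the case where $X$ is over $\F_q$ (by replacing $X$ with its image under $\PP^n_\F \to \PP^n_{\F_q}$), and then bounds the upper density of $\{f : f|_X = 0\}$ by the density of $\{f : f(x)=0\}$ for a single closed point $x \in X$, which equals $q^{-\deg x}$; since a positive-dimensional $\F_q$-scheme has closed points of arbitrarily large degree, the density is~$0$. Compared with your approach, the paper's avoids the Hilbert polynomial and any linear algebra over $\F$ versus $\F_q$, at the cost of invoking the (easy) fact that positive-dimensional varieties over finite fields have points of every sufficiently large degree. Your argument, by contrast, yields the sharper quantitative statement that the proportion in degree $d$ is at most $q^{-h_C(d)}$, which decays like $q^{-c\,d^{\dim C}}$ rather than merely being bounded by every fixed $q^{-e}$; this extra precision is not needed here, but it is a genuine strengthening.
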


\begin{proof}
We may assume that $X$ is over $\F_q$;
if instead $X$ is over $\F$, replace $X$ by its image
under $\PP^n_{\F} \to \PP^n_{\F_q}$.
If $x$ is a closed point of $X$,
the upper density of the set of $f$ vanishing on $X$
is bounded by the density of the set of $f$ vanishing at $x$,
which is $1/\#\kappa(x)$.
Since $X$ is positive-dimensional, 
we may choose $x$ of arbitrarily large degree.
\end{proof}

\begin{lemma}
\label{L:intersects X}
Let $X$ be a positive-dimensional subscheme of $\PP^n$ over $\F_q$ or $\F$.
The density of $\{ f \in S_{\homog} : H_f \intersect X \ne \emptyset \}$ 
is $1$.
\end{lemma}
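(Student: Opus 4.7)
The plan is to show the upper density of the ``bad set''
\[
\calB := \{f \in S_{\homog} : H_f \cap X = \emptyset\}
\]
is~$0$ by bounding it below any given $\epsilon>0$. Following Lemma~\ref{L:vanishing on X}, we first reduce to $X \subseteq \PP^n_{\F_q}$: a positive-dimensional $X \subseteq \PP^n_{\F}$ is defined over some $\F_{q^e}$ (its defining ideal uses only finitely many coefficients), and the finite morphism $\PP^n_{\F_{q^e}} \to \PP^n_{\F_q}$ sends a model $X_0$ onto a closed positive-dimensional subscheme $X' \subseteq \PP^n_{\F_q}$ with $H_f \cap X = \emptyset \iff H_f \cap X' = \emptyset$ (for $f$ defined over $\F_q$, the value of $f$ at a closed point of $X_0$ agrees with its value at the image in $X'$).

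For any finite set $\{x_1, \ldots, x_N\}$ of distinct closed points of $X$, we have the inclusion
\[
\calB \subseteq \{f : f(x_i) \neq 0 \text{ for all } i\}.
\]
Serre vanishing applied to the ideal sheaf of $\{x_1, \ldots, x_N\}$ gives the surjectivity of the evaluation map $S_d \to \bigoplus_{i=1}^N \kappa(x_i)$ for $d \gg 0$, and a direct count of preimages in $S_d$ then yields
\[
\mu\bigl(\{f : f(x_i) \neq 0 \text{ for all } i\}\bigr) = \prod_{i=1}^N \bigl(1 - \#\kappa(x_i)^{-1}\bigr),
\]
which bounds the upper density of $\calB$. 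It suffices, then, to choose $\{x_1, \ldots, x_N\}$ to make this product arbitrarily small---equivalently, to show that the sum $\sum \#\kappa(x)^{-1}$ over closed points $x$ of $X$ diverges.

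For the divergence, pass to an irreducible component of $X$ of dimension~$\geq 1$ and iteratively intersect with $\F_q$-hyperplanes not containing it (such hyperplanes exist because those that do form a proper linear subspace of the $\F_q$-points of the dual projective space) to extract a $1$-dimensional integral closed subscheme $C \subseteq X$. The Lang--Weil estimate gives $\#C(\F_{q^d}) \geq c\,q^d$ for $d \gg 0$; after subtracting the negligible contribution of proper subfields, $C$ has $\gtrsim q^d/d$ closed points of degree exactly~$d$, yielding $\sum_x \#\kappa(x)^{-1} \gtrsim \sum_d 1/d = \infty$.

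The main non-trivial input is this point-count on~$X$; the reduction to $\F_q$ and the linear-algebra count of nonvanishing polynomials are essentially bookkeeping. I therefore expect the main obstacle to be verifying the Lang--Weil estimate (or the existence of a curve in~$X$ with many $\F_{q^d}$-points), which however is standard.
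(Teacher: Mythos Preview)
Your proof is correct and follows essentially the same approach as the paper: reduce to $X$ over $\F_q$, bound the bad set by the product $\prod_P(1-q^{-\deg P})$ over a finite set of closed points, and show this product can be made arbitrarily small. The only cosmetic difference is in the divergence step: the paper phrases it as $\zeta_{X_{<r}}(1)^{-1} \to 0$ since $\zeta_X(s)$ converges only for $\re(s)>\dim X$, whereas you unpack this by extracting a curve and invoking Lang--Weil directly.
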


\begin{proof}
Again we may assume that $X$ is over $\F_q$.
Given $r \in \R$,
let $X_{<r}$ be the set of closed points of $X$ of degree $<r$.
The density of $\{f \in S_{\homog} : H_f \intersect X_{<r} = \emptyset \}$ 
equals the finite product
\[
	\prod_{P \in X_{<r}} \left(1 - q^{-\deg P} \right) 
	= \zeta_{X_{<r}}(1)^{-1},
\]
which diverges to $0$ as $r \to \infty$,
since $\zeta_X(s)$ converges only for $\re(s)>\dim X$.
\end{proof}

\begin{lemma}
\label{L:open subscheme}
Let $X$ and $\phi$ be as in 
Theorem~\ref{T:inverse image bijection}.
Let $U \subseteq X$ be a dense open subscheme.
For $f$ in a set of density~$1$,
there is a bijection $\Irr_{\horiz} X_f \to \Irr_{\horiz} U_f$ 
sending $D$ to $D \intersect U$.
\end{lemma}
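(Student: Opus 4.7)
The plan is as follows. Set $Z \colonequals X \setminus U$. I would prove that for $f$ in a set of density~$1$, no horizontal irreducible component of $X_f$ is contained in $Z$; once that holds, the map $D \mapsto D \intersect U$ is a bijection $\Irr_{\horiz} X_f \to \Irr_{\horiz} U_f$ for purely formal reasons.

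For the formal part, if $D \in \Irr_{\horiz} X_f$ meets $U$, then $D \intersect U$ is a dense open of the irreducible $D$, hence irreducible with closure $D$ in $X$; it is closed in $U_f$, and a standard closure/maximality argument shows it is an irreducible component of $U_f$ (any strictly larger irreducible closed subset of $U_f$ would have closure in $X_f$ strictly containing $D$). Continuity of $\phi$ gives $\overline{\phi(D \intersect U)} = \overline{\phi(D)}$, so horizontality transfers, and injectivity is recovered by taking closures. For surjectivity, given $D' \in \Irr_{\horiz} U_f$, pick any component $D''$ of $X_f$ containing $\overline{D'}$; then $D''$ is horizontal (as it contains the horizontal $D'$), hence by our assumption on $f$ meets $U$, so $D'' \intersect U$ is a component of $U_f$ containing $D'$, forcing $D'' \intersect U = D'$ and $D'' = \overline{D'}$.

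The substantive step is the claim that if a horizontal $D \in \Irr X_f$ lies entirely in $Z$, then $D$ coincides with an irreducible component $Z_i$ of $Z$. Let $C$ be the irreducible component of $X$ containing $D$; density of $U$ in $X$ forces $C \not\subseteq Z$. If $f$ vanished on $C$, then $D = C \not\subseteq Z$, a contradiction, so $\dim D = \dim C - 1$. Any component $Z_i$ of $Z$ containing $D$ lies in $C$ and satisfies $\dim Z_i \leq \dim C - 1 = \dim D$, hence $D = Z_i$. Horizontality of $D = Z_i$ then makes $W_i \colonequals \overline{\phi(Z_i)} \subseteq \PP^n_\F$ positive-dimensional, and $D \subseteq X_f$ translates to $W_i \subseteq H_f$. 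Since $X$ is of finite type over $\F$, $Z$ has only finitely many irreducible components, so only finitely many such $W_i$ arise; applying Lemma~\ref{L:vanishing on X} to each positive-dimensional $W_i \subseteq \PP^n_\F$ shows the set of $f$ with $W_i \subseteq H_f$ has density~$0$, and the finite union is still density~$0$.

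The main obstacle I expect is the dimension-counting step identifying the bad $D$ with an entire component of $Z$; once that is in place, everything else is formal bijection bookkeeping together with a finite application of Lemma~\ref{L:vanishing on X}.
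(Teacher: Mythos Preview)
Your proof is correct and follows essentially the same approach as the paper's: both exclude the density-$0$ set of $f$ vanishing on $\overline{\phi(Z_i)}$ for the finitely many horizontal irreducible components $Z_i$ of $X\setminus U$ (via Lemma~\ref{L:vanishing on X}), then argue that any horizontal $D\in\Irr X_f$ lying in $X\setminus U$ must coincide with one of those $Z_i$, after which the bijection is formal. The paper is terser on the key step---it simply asserts ``$D\in\Irr_{\horiz}(X\setminus U)$'' where you supply the dimension argument---but the content is the same.

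One small imprecision: you claim ``any component $Z_i$ of $Z$ containing $D$ lies in $C$'', but $Z_i$ need only lie in \emph{some} irreducible component $C'$ of $X$, not necessarily the particular $C$ you fixed (indeed $D$ may lie in several components of $X$). The repair is immediate: since $D\subseteq Z_i\subseteq C'$, your own argument with $C'$ in place of $C$ shows that either $C'\subseteq X_f$ (forcing $D=C'\not\subseteq Z$, contradiction) or $\dim D=\dim C'-1$; density of $U$ gives $Z_i\subsetneq C'$, so $\dim Z_i\le\dim C'-1=\dim D$ and hence $D=Z_i$ as you wanted.
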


\begin{proof}
Lemma~\ref{L:vanishing on X} shows that 
the set of $f$ vanishing on $\overline{\phi(D)}$
for some $D \in \Irr_{\horiz}(X \setminus U)$ has density~$0$.
After excluding such $f$,
every $D \in \Irr_{\horiz} X_f$ meets $U$
(if not, 
then $D \in \Irr_{\horiz}(X \setminus U)$ and $f(\overline{\phi(D)})=0$).
Then the sets $\Irr_{\horiz} X_f$ and $\Irr_{\horiz} U_f$ 
are in bijection: 
the forward map sends $D$ to $D \intersect U$,
and the backward map sends $D$ to its closure in $X_f$.
\end{proof}

\begin{lemma}
\label{L:restatement of bijection}
Let $X$ be a smooth finite-type $\F$-scheme 
with a morphism $\phi \colon X \to \PP^n_\F$
such that $\dim \overline{\phi(C)} \ge 2$ for every $C \in \Irr X$.
Let $f \in S_{\homog} \setminus \{0\}$.
The following are equivalent:
\begin{enumerate}[\upshape (a)]
\item There is a bijection $\Irr X \to \Irr_{\horiz} X_f$ 
sending $C$ to $(C_f)_{\horiz} = (C \intersect X_f)_{\horiz}$.
\item For every $C \in \Irr X$, the scheme $(C_f)_{\horiz}$ is irreducible.
\end{enumerate}
\end{lemma}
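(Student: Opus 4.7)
The key observation is that $X$ being smooth over $\F$ forces its irreducible components to be pairwise disjoint: at every point of $X$ the local ring is regular, hence a domain, so only one minimal prime (equivalently, only one irreducible component) passes through that point. Therefore $X = \bigsqcup_{C \in \Irr X} C$ as a disjoint union of open-and-closed subschemes, and pulling back the Cartier divisor cut out by $f$ gives $X_f = \bigsqcup_{C \in \Irr X} C_f$. Consequently $\Irr X_f = \bigsqcup_C \Irr C_f$, and since horizontality of an irreducible component depends only on the component itself (through the dimension of its image in $\PP^n$), this restricts to $\Irr_{\horiz} X_f = \bigsqcup_C \Irr_{\horiz} C_f$, with the $\Irr_{\horiz} C_f$ forming disjoint subsets.

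For (a) $\Rightarrow$ (b): if $C \mapsto (C_f)_{\horiz}$ is a well-defined map $\Irr X \to \Irr_{\horiz} X_f$, then $(C_f)_{\horiz}$ is a single element of $\Irr_{\horiz} X_f$; since it is contained in $C_f$ and disjoint from every other $C'_f$, it must in fact lie in $\Irr_{\horiz} C_f$, so $(C_f)_{\horiz}$ is a single horizontal irreducible component of $C_f$ and hence irreducible. For (b) $\Rightarrow$ (a): if $(C_f)_{\horiz}$ is irreducible (in particular nonempty) for each $C$, then, being the union of the horizontal irreducible components of $C_f$, it must consist of exactly one such component, so $\Irr_{\horiz} C_f = \{(C_f)_{\horiz}\}$. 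Assembling these singletons over $C \in \Irr X$ gives $\Irr_{\horiz} X_f = \bigsqcup_C \{(C_f)_{\horiz}\}$, and $C \mapsto (C_f)_{\horiz}$ is visibly a bijection onto this set.

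The only substantive ingredient is the disjointness of irreducible components in a smooth scheme; everything else is formal manipulation of the definitions, so I do not anticipate a genuine obstacle. Notably, the hypothesis $\dim \overline{\phi(C)} \ge 2$ is not actually used in this equivalence—it will presumably matter only when Lemma~\ref{L:restatement of bijection} is combined with subsequent results that ensure $(C_f)_{\horiz}$ is nonempty for a density-one set of $f$.
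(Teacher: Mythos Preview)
Your proof is correct and follows essentially the same approach as the paper's: both hinge on the disjointness of irreducible components in a smooth scheme, from which well-definedness, injectivity, and surjectivity of the map are immediate. Your write-up is simply more explicit (the paper compresses (b)$\Rightarrow$(a) into three short sentences), and your observation that the hypothesis $\dim \overline{\phi(C)} \ge 2$ plays no role in this particular equivalence is accurate---the paper does not invoke it either.
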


\begin{proof}\hfill

(a)$\Rightarrow$(b): If the map is defined, 
then each $(C_f)_{\horiz}$ is irreducible by definition.

(b)$\Rightarrow$(a): 
The assumption~(b) implies that the map in~(a) is defined.
It is surjective since any irreducible component of $X_f$
is contained in an irreducible component of $X$.
Smoothness implies that the irreducible components of $X$ are disjoint,
so the map is injective.
\end{proof}

\begin{lemma}
\label{L:singular locus has dim 0}
Let $X$ be a subscheme of $\PP^n_\F$ that is smooth of dimension $m$.
For $f$ in a set of density~$1$,
the singular locus $(X_f)^{\sing}$ is finite.
\end{lemma}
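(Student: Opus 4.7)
The plan is to follow the strategy of \cite{Poonen2004-bertini}. For $\bar x \in X(\F) \subseteq \PP^n(\F)$, write $e(\bar x)$ for the degree of the image of $\bar x$ in $\PP^n_{\F_q}$ under the structural morphism, and call this the \emph{ambient degree} of $\bar x$. The goal is to bound the density of $f \in S_d$ for which $(X_f)^{\sing}$ contains some $\bar x$ with $e(\bar x) \ge r$, and to show that this bound tends to $0$ as $r \to \infty$, uniformly in $d$. The lemma follows at once: if $(X_f)^{\sing}$ were infinite it would be positive-dimensional in $\PP^n_\F$, hence have infinitely many $\F$-points; since $\PP^n(\F_{q^{r!}})$ is finite for every $r$, such a set must contain an $\F$-point outside $\PP^n(\F_{q^{r!}})$, necessarily of ambient degree $>r$.

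To establish the bound I split the ambient-degree range into \emph{medium} ($r \le e(\bar x) \le d/(m+1)$) and \emph{high} ($e(\bar x) > d/(m+1)$) contributions, as in \cite{Poonen2004-bertini}. For the medium range, I will descend $X$ to some $X_0 \subseteq \PP^n_{\F_{q^{e_0}}}$: then each $\bar x \in X(\F)$ of ambient degree $e$ corresponds to an $\F_{q^e}$-point of $X_0$ (in particular forcing $e_0 \mid e$, so that $e = [\kappa(x_0):\F_q]$ for the underlying closed point $x_0$ of $X_0$). A Serre-vanishing argument shows that for $d$ sufficiently large the evaluation $S_d \to (\calO_{X_0}/\mm_{x_0}^2)_{x_0}$ is surjective, so the condition ``$X_f$ is singular at $\bar x$'' cuts out in $S_d$ an $\F_q$-subspace of codimension exactly $(m+1)e$. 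Combined with the Lang--Weil estimate $\#X_0(\F_{q^e}) = O(q^{me})$, a union bound over the medium range yields total density $\sum_{r \le e \le d/(m+1)} O(q^{me})\cdot q^{-(m+1)e} = O(q^{-r})$, uniformly in $d$. For the high range I will invoke the sieve argument of \cite{Poonen2004-bertini}*{Lemma 2.6}: decompose $f$ one coordinate at a time as $f = g_0 + x_i g_1 + \cdots$ and estimate the density of bad $f$'s by induction on lower-dimensional slices, obtaining a contribution that tends to $0$ as $d \to \infty$ for each fixed $r$.

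The main obstacle will be carrying out the high-degree sieve in our setup, since $X$ lives over $\F$ rather than $\F_q$ and the ``residue-field degree'' that drives Poonen's original count is not intrinsic to closed points of $X$. Nevertheless, Poonen's sieve ultimately rests only on dimension-theoretic bounds for the number of $\F$-points of $X$ lying in affine slices, and these survive the reformulation in terms of ambient degrees after descending $X$ to the finite subfield $\F_{q^{e_0}}$. Combining the medium and high estimates gives $\limsup_{d\to\infty}$ of the density of bad $f$ at most $O(q^{-r})$ for every $r$, which tends to $0$, completing the proof.
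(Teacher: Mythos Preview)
Your outline has a genuine gap in the high-degree sieve, and this is precisely where the difficulty lies when $X$ is defined only over $\F$ rather than over $\F_q$.

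The $p$-th power decoupling trick of \cite{Poonen2004-bertini}*{Lemma~2.6} writes $f=f_0+g_1^p t_1+\cdots+g_m^p t_m$ with $t_i\in\F_q[x_1,\dots,x_n]$ and controls $(D_if)(P)$ independently through the $g_i$. For this to detect singularities of $X_f$, the derivations $D_1,\dots,D_m$ must be tangent to $X$ at $P$ \emph{and} be defined over $\F_q$ (otherwise $g_i^p t_i$ does not lie in $S_d$). When $X$ lives only over $\F$, the tangent directions to $X$ are not $\F_q$-rational, so the naive transplant of Lemma~2.6 fails. Your remark that the sieve ``rests only on dimension-theoretic bounds'' overlooks that the independence of the $D_if$ is what makes the induction go, and that independence depends on the $p$-th power decomposition over $\F_q$. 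Descending to $X_0/\F_{q^{e_0}}$ does not help: Poonen's Lemma~2.6 applied to $X_0$ computes densities in $\F_{q^{e_0}}[x_0,\dots,x_n]_d$, not in $S_d$.

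The paper resolves this with a new idea you did not anticipate. It considers all Galois conjugates $X_1,\dots,X_N$ of $X$ inside $\Aff^n_\F$, stratifies $\Aff^n_{\F_q}$ into locally closed pieces $C_{k,r}$ according to how many conjugates pass through a point and the dimension $r$ of the span $V_x=\sum_{i:\,x\in X_i}T_xX_i$, and observes that this span assembles into a rank~$r$ subbundle $\VV$ of $\TT_{\Aff^n_{\F_q}}|_{C_{k,r}}$ defined over $\F_q$. Local trivializations of $\VV$ then furnish derivations $D_1,\dots,D_m\in\F_q[x_1,\dots,x_n]$ with the required properties, and the sieve runs. A side benefit is that the paper never needs your medium/high split: since the goal is only \emph{finiteness} of $(X_f)^{\sing}$ (not absence of high-degree singular points), the inductive dimension-drop argument alone suffices.

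A smaller issue: your claim that $\bar x\in X(\F)$ of ambient degree $e$ forces $e_0\mid e$ is false---take $\bar x$ an $\F_q$-point lying on $X\cap{}^\sigma X$. Relatedly, the evaluation $S_d\to\calO_{X_0,x_0}/\mm_{x_0}^2$ need not be surjective when $[\kappa(x_0):\F_q]>e$. The inequality $\mathrm{codim}\ge(m+1)e$ you need can still be salvaged by working instead with the image of $S_d$ in $\calO_{\PP^n_{\F_q},y}/\mm_y^2$ and intersecting with the conormal space of $X$ at $\bar x$, but this requires a different argument than the one you sketched.
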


\begin{proof}
We follow the proof of \cite{Poonen2004-bertini}*{Lemma~2.6 and Theorem~3.2}.
The formation of $(X_f)^{\sing}$ is local,
so we may replace $\PP^n_\F$ by $\Aff^n_\F$,
and replace each $f$ by the corresponding dehomogenization 
in the $\F_q$-algebra $A \colonequals \F_q[x_1,\ldots,x_n]$.
For $d \ge 1$, let $A_{\le d}$ be the set of $f \in A$ 
of total degree at most $d$.
Let $\TT_X$ be the tangent sheaf of $X$;
identify its sections with derivations.

Let $X_1,\ldots,X_n$ be the distinct $\Gal(\F/\F_q)$-conjugates of $X$ 
in $\Aff^n_\F$.
For each nonnegative integer $k$, let $B_k$ be set of points in $\Aff^n(\F)$
contained in exactly $k$ of the $X_i$.
For $x \in B_k$, let $V_x$ be the span of the tangent spaces $T_x X_i$
in $T_x \Aff^n_\F$ for all $X_i$ containing $x$.
For $r \ge m$, define $C_{k,r} \colonequals \{x \in B_k : \dim V_x = r\}$
(it would be empty for $r<m$).
Each $B_k$ and each $C_{k,r}$ is the set of $\F$-points of
a locally closed subscheme of $\Aff^n_{\F_q}$;
from now on, $B_k$ and $C_{k,r}$ refer to these subschemes. 
There is a rank~$r$ subbundle $\VV$ of $\TT_{\Aff^n_{\F_q}}|_{C_{k,r}}$
whose fiber at $x \in C_{k,r}(\F)$ is $V_x$.
If $x \in (X_f)^{\sing}(\F) \intersect C_{k,r}(\F)$,
and $y$ is its image in $\Aff^n_{\F_q}$,
then $T_y H_f \supseteq \VV \tensor \kappa(y)$
as subspaces of $T_y \Aff^n_{\F_q}$.

Suppose that $y$ is a closed point of the $\F_q$-scheme $C_{k,r}$.
Choose global derivations $D_1,\ldots,D_r \colon A \to A$
whose images in $\TT_{\Aff^n_{\F_q}}|_{C_{k,r}}$ form a basis for $\VV$
on some neighborhood $U$ of $y$ in $C_{k,r}$.
{}From now on, we use only $D_1,\ldots,D_m$.
Choose $t_1,\ldots,t_m \in A$ such that $D_i(t_j)$ is nonzero at $x$
if and only if $i=j$.
After shrinking $U$, we may assume that 
the values $D_i(t_i)$ are invertible on $U$.
Let $U_X \colonequals U_\F \intersect X$.
If $P \in U_X \intersect (X_f)^{\sing}$,
then $f(P)=(D_1 f)(P) = \cdots = (D_m f)(P)=0$.

Let $\tau=\max_i(\deg t_i)$ and $\gamma=\lfloor (d-\tau)/p \rfloor$.
Select $f_0 \in A_{\le d}$, $g_1 \in A_{\le \gamma}$, \dots, 
$g_m \in A_{\le \gamma}$ 
uniformly and independently at random,
and define
\[
	f \colonequals f_0 + g_1^p t_1 + \dots + g_m^p t_m.
\]
Then the distribution of $f$ is uniform over $A_{\le d}$,
and $D_i f = D_i f_0 + g_i^p (D_i t_i)$ for each $i$.
For $0 \le i \le m$, define the $\F$-scheme
\[
	W_i \colonequals U_X \intersect \{D_1f = \cdots = D_if = 0\}.
\]
Thus $U_X \intersect (X_f)^{\sing} \subseteq W_m$.

In the remainder of this proof, 
the big-$O$ and little-$o$ notation indicate the behavior as $d \to \infty$,
and the implied constants may depend on $n$, $X$, $U$, and the $D_i$,
but not on $f_0$ or the $g_i$ (and of course not on $d$).
We claim that for $0 \le i \le m-1$,
conditioned on a choice of $f_0$, $g_1$, \dots, $g_i$
for which $\dim W_i \le m-i$,
the probability that $\dim W_{i+1} \le m-i-1$ is $1-o(1)$ as $d \to \infty$.
First, let $Z_1,\ldots,Z_\ell$ be the 
$(m-i)$-dimensional irreducible components of $(W_i)_{\red}$.
As in the proof of \cite{Poonen2004-bertini}*{Lemma~2.6},
we have $\ell = O(d^i)$ by B\'ezout's theorem,
and the probability that $D_{i+1} f$ vanishes on a given $Z_j$
is at most $q^{-\gamma-1}$,
so the probability that the inequality $\dim W_{i+1} \le m-i-1$ \emph{fails} 
is at most $\ell q^{-\gamma-1} = o(1)$,
as claimed.

By induction on $i$, the previous paragraph proves that 
$\dim W_i \le m-i$ with probability $1-o(1)$ as $d \to \infty$, 
for each $i$.
In particular, $W_m$ is finite with probability $1-o(1)$.
Thus $U_X \intersect (X_f)^{\sing}$ is finite with probability $1-o(1)$.
Finally, $X$ is covered by $U_X$ for finitely many $U$
(contained in different $C_{k,r}$),
so $(X_f)^{\sing}$ is finite with probability $1-o(1)$.
\end{proof}

\begin{lemma}
\label{L:divisibility}
Let $L \supseteq k$ be a Galois extension of fields.
Let $\phi \colon V \to W$ be a morphism of irreducible $k$-varieties.
If $W$ is normal, then $\# \Irr W_L$ divides $\# \Irr V_L$.
\end{lemma}

\begin{proof}
Let $G=\Gal(L/k)$.
Let $V_0 \in \Irr V_L$.
Since $W$ is normal, 
$W_L$ is normal by \cite{Raynaud1970henseliens}*{VII, Proposition~2},
so the irreducible components of $W_L$ are disjoint.
Thus $\phi(V_0) \subseteq W_0$ for a unique $W_0 \in \Irr W_L$.
For the action of $G$ on $\Irr V_L$ and $\Irr W_L$,
the stabilizers satisfy $\Stab V_0 \subseteq \Stab W_0$.
Thus $(G:\Stab W_0)$ divides $(G:\Stab V_0)$.
Since $W$ is irreducible, $G$ acts transitively on $\Irr W_L$,
so $(G:\Stab W_0) = \# \Irr W_L$, 
and likewise $(G:\Stab V_0) = \# \Irr V_L$.
\end{proof}

\section{Surfaces over a finite field}\label{S:surfaces}

\begin{proposition}
\label{P:2-dimensional case}
Let $X$ be a $2$-dimensional closed integral subscheme of $\PP^n_{\F_q}$.
For $f$ in a set of density~$1$, there is a bijection 
$\Irr X_{\F} \to \Irr (X_f)_\F$ sending $C$ to $C \intersect X_f$.
\end{proposition}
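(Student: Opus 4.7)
My plan has three main parts: resolution of singularities to reach a smooth surface; a counting argument for reducible sections of a line bundle; and the function field Chebotarev density theorem to bridge $\F_q$-versus-$\F$ irreducibility.

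First, using the $\Gal(\F/\F_q)$-orbit structure on $\Irr X_\F$, I would reduce to the case where $X$ is geometrically integral over $\F_q$. By Lemma~\ref{L:vanishing on X}, I may also assume $f$ does not vanish on $X$. Since $X$ is an excellent two-dimensional integral scheme, Lipman's theorem on resolution of singularities supplies a proper birational $\pi \colon \widetilde X \to X$ with $\widetilde X$ smooth, projective, and geometrically integral over $\F_q$. Setting $L \colonequals \pi^* \OO_X(1)$, the pullback $\pi^* f$ of any $f \in S_d$ is a section of $L^{\otimes d}$ whose zero divisor is $E_f + T_f$, with $E_f$ exceptional and $T_f$ the strict transform of $X_f$. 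Components of $(X_f)_\F$ correspond bijectively under $\pi$ to non-exceptional components of $(T_f)_\F$, so the goal becomes: show $(T_f)_\F$ is irreducible for $f$ in a density-$1$ subset of $S_d$.

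The heart is the counting. If $(T_f)_\F$ decomposes nontrivially as $T_1 + T_2$ with both $T_i$ effective and non-exceptional, the classes $[T_i] \in \NS(\widetilde X_\F)$ satisfy $[T_1] + [T_2] = [L^{\otimes d}] - [E_f]$. Since $\NS(\widetilde X_\F)$ is finitely generated and the cone of curves on the smooth surface restricts effective classes, the number of candidate pairs of classes grows only polynomially in $d$. For each candidate, Riemann--Roch on the surface gives the gap
\[
h^0(L^{\otimes d}) - h^0(L_1) - h^0(L_2) = L_1 \cdot L_2 - \chi(\OO_{\widetilde X}) + O(1),
\]
and Hodge index (with an ample class on $\widetilde X_\F$) yields $L_1 \cdot L_2 \ge c d$ uniformly over the allowed decompositions. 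Since each factorization $\pi^* f = s t$ contributes at most $|H^0(L_1)||H^0(L_2)|/(q-1)$ sections, summing over the polynomially many classes gives a fraction of bad $f$ bounded by $O(d^C q^{-cd}) \to 0$. The bound transfers from $H^0(\widetilde X, L^{\otimes d})$ back to $S_d$ since $S_d \surjects H^0(X, \OO_X(d))$ for $d$ large, with finite-codimension image in $H^0(\widetilde X, L^{\otimes d})$.

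This only controls $f$ such that $(T_f)_{\F_q}$ is reducible. To exclude $f$ for which $(T_f)_{\F_q}$ is irreducible but splits over $\F$---equivalently, the algebraic closure of $\F_q$ in $\kappa(T_f)$ is a proper extension---I apply the function field Chebotarev density theorem to the family of curves $\{T_f\}_f$, which gives a density-$0$ bound as $d \to \infty$. The main obstacle will be the uniform Hodge-index-type bound $L_1 \cdot L_2 \ge cd$: on a resolution carrying $(-1)$-curves or other effective classes of large positive self-intersection, the naive estimate $L_1 \cdot L_2 = d\,(L_1 \cdot L) - L_1^2$ can degrade, and obtaining uniformity requires careful use of the pseudoeffective cone of $\widetilde X_\F$.
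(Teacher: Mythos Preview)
Your plan through the first two parts tracks the paper's proof closely: resolve to a smooth projective $\widetilde X$, set $B$ with $\OO(B) \simeq \pi^*\OO_X(1)$ (big and nef), and count $\F_q$-reducible pullback divisors by bounding, for each splitting class, the number of pairs of sections. One correction: your displayed Riemann--Roch identity equates $h^0$ with $\chi$, which fails because $h^1(L_i)$ is not uniformly $O(1)$ over all decompositions. The paper instead proves directly (Lemma~\ref{L:dimension of space of sections}) that $h^0(\widetilde X,M) \le (M.B)^2/(2B.B) + O(M.B)$ for any $M$, by repeatedly restricting to a member of $|B|$; this is the content of your Hodge-index remark, and together with the exclusion (via Lemma~\ref{L:vanishing on X}) of $f$ containing one of the finitely many curves of $B$-degree below a threshold $d_0$, it yields the count. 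So your ``main obstacle'' is real but surmountable along the lines you indicate.

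The substantive gap is the third step. Invoking ``Chebotarev for the family $\{T_f\}_f$'' is not an argument: there is no fixed Galois cover, the curves vary with $f$, their genera grow, and the degree of the putative constant-field extension depends on $f$. The paper does not use Chebotarev here (Chebotarev appears later, in Lemma~\ref{L:finite etale}, for a different reduction involving a fixed \'etale cover). Instead it runs a second counting argument over extension fields: passing to a geometric component $Y$ of $\widetilde X$ defined over $\F_r$, an $\F_q$-irreducible but geometrically reducible $X_f$ forces the corresponding divisor on $Y_{\F_{r^e}}$ (for some $e\ge 2$) to be a Galois sum $\sum_{\sigma\in\Gal(\F_{r^e}/\F_r)}{}^\sigma\! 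D$; applying Lemma~\ref{L:dimension of space of sections} to $\OO(D)$ on $Y_{\F_{r^e}}$ gives an $h^0$ bound with leading exponent $d^2 B.B/(2e)$, and summing over $e\ge 2$ beats the main term $d^2 B.B/2$. This same passage to $Y$ is what replaces your opening reduction ``to geometrically integral $X$,'' which as stated is not valid: the geometric components of an integral $X$ live over a larger field $\F_r$, yet the hypersurfaces remain over $\F_q$, so one cannot simply swap $X$ for a component and invoke the geometrically integral case over the new base.
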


Before beginning the proof of Proposition~\ref{P:2-dimensional case},
we prove a lemma.

\begin{lemma}
\label{L:dimension of space of sections}
Let $Y$ be a smooth projective irreducible surface over a field $k$.
Let $B$ be a big and nef line bundle on $Y$
(see, e.g., \cite{Lazarsfeld2004vol1}*{Definitions 2.2.1 and 1.4.1}).
Then for every line bundle $L$ on $Y$,
\[
	h^0(Y,L) \le \frac{(L.B)^2}{2 B.B} + O(L.B) + O(1),
\]
where the implied constants depend on $Y$ and $B$, but not $L$.
\end{lemma}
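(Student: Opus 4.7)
My plan is to bound $h^0(Y, L)$ by iteratively slicing $Y$ with an integral curve numerically equivalent to a positive multiple of $B$, then summing contributions on each slice. I first reduce to the case where $k$ is algebraically closed (base change preserves both $h^0$ and intersection numbers), and to the case $L.B \ge 0$: if $L.B < 0$, then nefness of $B$ prevents $L$ from being effective, so $h^0(Y, L) = 0$ and the stated bound is vacuous.

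The key geometric input is a smooth irreducible curve $C$ on $Y$ with $C$ numerically equivalent to $nB$ for some $n \ge 1$. Since $B$ is big and nef on a smooth projective surface, some multiple $nB$ is base-point-free (Zariski's theorem, or the base-point-free theorem specialized to surfaces); Bertini over the infinite field $\kbar$ then yields such a $C \in |nB|$. In particular $C.L = n(L.B)$ and $C^2 = n^2 B^2$.

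Iterating the short exact sequence
\[
0 \to L(-(j+1)C) \to L(-jC) \to L(-jC)|_C \to 0
\]
for $j = 0, \dots, m-1$ and telescoping in cohomology yields
\[
h^0(Y, L) \le h^0(Y, L - mC) + \sum_{j=0}^{m-1} h^0\bigl(C, (L - jC)|_C\bigr).
\]
Choosing $m = \lceil L.B/(nB^2) \rceil + 1$ makes $(L - mC).B < 0$, so by nefness $L - mC$ is not effective and the first term vanishes. The elementary bound $h^0(C, M) \le \max(0, \deg M + 1)$ on the smooth irreducible curve $C$ then bounds the $j$-th summand by $\max(0, n(L.B) - j n^2 B^2 + 1)$, and summing this arithmetic progression gives $(L.B)^2/(2B^2) + O(L.B) + O(1)$, with implicit constants depending on $n$, $Y$, and $B$.

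The main obstacle is producing the integral curve $C$: this rests on the semi-ampleness of big and nef divisors on smooth surfaces, which is the only deep ingredient. Everything else -- the long exact sequence of cohomology and Riemann--Roch on a curve -- is routine.
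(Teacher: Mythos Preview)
Your overall strategy---slice by a divisor in the class of a multiple of $B$, telescope, and sum an arithmetic progression---is exactly the paper's. The gap is in producing the smooth irreducible curve $C$: big and nef does \emph{not} imply semi-ample on a smooth projective surface in characteristic~$0$. The standard counterexample is Zariski's. Blow up $\PP^2$ at twelve sufficiently general points on a smooth plane cubic $C_0$ and set $B = 4H - \sum E_i$; then $B^2=4$ and one checks that $B$ is nef, but $B$ restricted to the strict transform $C_0'$ is a non-torsion degree-$0$ line bundle on the elliptic curve $C_0' \isom C_0$, so $C_0'$ lies in the base locus of $|mB|$ for every $m\ge 1$ and no multiple of $B$ is base-point-free. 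Neither of the results you invoke covers this: Zariski's theorem presupposes that the stable base locus is already zero-dimensional, and the Kawamata--Shokurov base-point-free theorem requires $aB - K_Y$ to be nef and big for some $a>0$, which is not automatic.

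The paper avoids the issue by using only bigness: after replacing $B$ by a power, one has $B = \OO(D)$ for some effective divisor $D$ with no control over its components or reducedness, and one telescopes with that $D$; the curve estimate $h^0(C,M)\le \max(0,\deg M+1)$ must then be replaced by a bound for $h^0(D,L|_D)$ on the one-dimensional scheme $D$. As a side remark, your semi-ampleness claim \emph{is} correct over $\Fbar_p$, by Keel's base-point-free theorem (every degree-$0$ line bundle on a projective curve over $\Fbar_p$ is torsion, so $B$ restricted to its exceptional locus is semi-ample). Hence your argument does establish the lemma in the only characteristic the paper ever uses; only the stated generality over an arbitrary field $k$ is lost.
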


\begin{proof}
If $C$ is an effective curve on $Y$, then $C.B \ge 0$ since $B$ is nef.
In particular, if $L$ has a nonzero section, then $L.B \ge 0$.
Thus if $L.B<0$, then $h^0(Y,L)=0$.

Now suppose that $L.B \ge 0$.
Since $B$ is big, we may replace $B$ by a power to assume that
$B = \OO(D)$ for some effective divisor $D$.
Taking global sections in
\[
	0 \to L(-D) \to L \to L|_D \to 0
\]
yields
\begin{equation}
\label{E:h^0 inequality}
	h^0(Y,L) \le h^0(Y,L(-D)) + h^0(D,L|_D) = 
	h^0(Y,L(-D)) + L.B + O(1),
\end{equation}
while $L(-D).B = L.B - B.B$.
Combine \eqref{E:h^0 inequality} for
$L$, $L(-D)$, $L(-2D)$, \dots until reaching $L(-mD)$
such that $L(-mD).B < 0$;
then $h^0(Y,L(-mD))=0$ by the first paragraph.
We have $m \le \lfloor L.B/B.B \rfloor + 1$,
and the result follows by summing an arithmetic series.
\end{proof}

\begin{proof}[Proof of Proposition~\ref{P:2-dimensional case}]
Let $\pi \colon \Xtilde \to X$ be a resolution of singularities of $X$.
Let $B$ be a divisor with $\OO(B) \isom \pi^* \OO_X(1)$.
Then $B$ is big and nef, so there exists $b \in \Z_{>0}$
such that $bB$ is linearly equivalent 
to $A+E$ with $A$ ample and $E$ effective.
Consider $f \in S_d$.
By Lemma~\ref{L:vanishing on X}, 
we may discard the $f$ vanishing on any one curve in $X$.
In particular, given a positive constant $d_0$, 
we may assume that $H_f$ does not contain any $1$-dimensional
irreducible component of $\pi(E)$
or any of the finitely many curves $C$ on $X$ with $C.\OO_X(1) < d_0$.
(In fact, we could have chosen $A$ and $E$ so that $\pi(E)$ is finite.)

The linear map
\[
	H^0(\PP^n_{\F_q},\OO(d)) \to H^0(X,\OO_X(d))
\]
is surjective for large $d$, and $h^0(X,\OO_X(d)) \to \infty$.
Thus densities can be computed by counting divisors $X_f$ 
(corresponding to elements of $\PP H^0(X,\OO_X(d))$)
instead of polynomials $f$.

If $X_f$ is reducible, then the Cartier divisor $\pi^* X_f$ 
may be written as $D+D'$
where $D$ and $D'$ are effective divisors
such that $D.B, D'.B \ge d_0$ and $D$ is irreducible 
(and hence horizontal relative to $\pi$ since $D.B>0$).
Let $L$ be the line bundle $\OO(D)$ on $\Xtilde$.
Since $\OO(\pi^* X_f) \isom \OO(d B)$,
we have $\OO(D') \isom \OO(dB) \tensor L^{-1}$.
Let $n \colonequals L.B$.
Then $d_0 \le n \le d B.B - d_0$.

We will bound the number of reducible $X_f$ 
by bounding the number of possible $L$ for each $n$,
and then bounding the number of pairs $(D,D')$ for a fixed $L$.
The assumption on $H_f$ implies that $D$ and $E$ have no common components,
so $L.E = D.E \ge 0$.
Thus $L.A \le L.(A+E) = L.(bB) = nb$.
The numerical classes of effective $L$ are lattice points 
in the closed cone $\overline{\NE}(\Xtilde) \subseteq (N_1 \Xtilde)_{\R}$, 
on which $A$ is positive (except at $0$),
so the number of such $L$ up to numerical equivalence 
satisfying $L.A \le nb$ is $O(n^\rho)$ as $n \to \infty$ for some $\rho$.
The number of $L$ within each numerical equivalence 
is at most $\# \PIC^\tau_{\Xtilde}(\F_q)$,
where $\PIC^{\tau}_{\Xtilde}$ is the finite-type subgroup scheme
of the Picard scheme parametrizing line bundles 
with torsion N\'eron--Severi class.
Thus the number of possible $L$ is $O(n^\rho)$.

Now fix $L$.
Recall that $n=L.B$.
By Lemma~\ref{L:dimension of space of sections} with $Y \colonequals \Xtilde$,
\begin{align*}
	& h^0(\Xtilde,L) + h^0(\Xtilde,\OO(dB) \tensor L^{-1}) \\
	&\le \frac{(L.B)^2}{2 B.B} + O(L.B) + O(1) 
		+ \frac{(dB.B-L.B)^2}{2 B.B} + O(dB.B-L.B) 
		+ O(1) \\
	&\le \frac{B.B}{2} d^2 - \frac{n(d B.B - n)}{B.B} + O(d).
\end{align*}
Summing over all $n \in [d_0,d B.B-d_0]$ and all $L$ shows that
the number of pairs $(D,D')$ is at most
\begin{align*}
	\sum_{n=d_0}^{d B.B -d_0} O(n^\rho) q^{h^0(\Xtilde,L) + h^0(\Xtilde,\OO(dB) \tensor L^{-1})}
	&\le O(d^\rho) \sum_{n=d_0}^{d B.B -d_0} q^{\frac{B.B}{2} d^2 - \frac{n(d B.B - n)}{B.B} + O(d)} \\
	&\le d^\rho q^{\frac{B.B}{2} d^2 + O(d)} \; 2 \sum_{n=d_0}^{d B.B/2} q^{- \frac{n(d B.B - n)}{B.B}} \\
	&\le q^{\frac{B.B}{2} d^2 + O(d)} \sum_{n=d_0}^{d B.B/2} q^{-nd/2} \\
	&\le q^{\frac{B.B}{2} d^2 - \frac{d_0 d}{2} + O(d)}.
\end{align*}
The number of reducible $X_f$ is at most this.
On the other hand, the total number of $X_f$ is
\begin{equation}
\label{E:number of X_f}
	\# \PP H^0(X,\OO_X(d)) = q^{\frac{B.B}{2} d^2 + O(d)} 
\end{equation}
since $\deg X = B.B$.
Dividing yields a proportion that tends to $0$ as $d \to \infty$,
provided that $d_0$ was chosen large enough.

\bigskip

Finally we must bound the number of irreducible $X_f$ such that 
the conclusion of Proposition~\ref{P:2-dimensional case} fails,
i.e., there is not a bijection $\Irr X_\F \to \Irr(X_f)_\F$ 
sending $C$ to $C \intersect X_f$.
Consider such an $f$.
Let $Y \in \Irr \Xtilde_\F$.
Let $\F_r$ be the field of definition of $Y$.
Redefine $Y$ as an element of $\Irr \Xtilde_{\F_r}$.
If we view the $\F_r$-scheme $Y$ as an $\F_q$-scheme
(by composing with $\Spec \F_r \to \Spec \F_q$), 
then the morphisms $Y \to \Xtilde \to X$ are birational $\F_q$-morphisms.
Thus $Y \times_{\F_q} \F$ and $X_\F$ share a common smooth dense open subscheme.
Applying Lemma~\ref{L:open subscheme} twice lets us deduce that, 
excluding $f$ in a set of density~$0$, 
there still is not a bijection 
$\Irr(Y \times_{\F_q} \F) \to \Irr_{\horiz}(Y_f \times_{\F_q} \F)$ 
sending $C$ to $(C_f)_{\horiz} = (C \intersect Y_f)_{\horiz}$.
Then, by Lemma~\ref{L:restatement of bijection} applied to $Y \times_{\F_q} \F$,
there exists $C \in \Irr(Y \times_{\F_q} \F)$ such that $(C_f)_{\horiz}$ 
is reducible.
But $C$ is the base change of $Y$ 
by \emph{some} $\F_q$-homomorphism $\F_r \to \F$,
so the $\F_r$-scheme $(Y_f)_{\horiz}$ is not geometrically irreducible.
On the other hand, by Lemma~\ref{L:intersects X}, 
for $f$ in a set of density~$1$, 
the scheme $X_f$ meets $X^{\smooth}$,
in which case $(Y_f)_{\horiz}$ viewed as $\F_q$-scheme is birational to $X_f$,
so $(Y_f)_{\horiz}$ is irreducible.
Thus we have a map from the set $\XX$ of irreducible $X_f$ 
such that the conclusion of Proposition~\ref{P:2-dimensional case}
fails (excluding the density $0$ sets above)
to the set $\YY$ of irreducible and geometrically reducible schemes
of the form $Z_{\horiz}$ for $Z \in \PP H^0(Y,\OO(d))$,
namely the map sending $X_f$ to $(Y_f)_{\horiz}$.
This map is injective since $X_f$ is determined as a Cartier divisor 
by any of its dense open subschemes, and $X_f$ and $(Y_f)_{\horiz}$ 
share a subscheme that is dense and open in both.
Therefore it suffices to bound $\# \YY$.

For $e \ge 2$, let $\YY_e$ be the set of $Z_{\horiz} \in \YY$
such that there exists an effective Cartier divisor $D$ on $Y_{\F_{r^e}}$
such that
\[
	(Z_{\F_{r^e}})_{\horiz} = \sum_{\sigma \in \Gal(\F_{r^e}/\F_r)} {}^\sigma \! D.
\]
Then $\YY = \Union_{e \ge 2} \YY_e$.

Let $L$ be the line bundle $\OO(D)$ on $Y_{\F_{r^e}}$.
Let $B_Y = B|_Y$.
Then $B.B = [\F_r:\F_q] B_Y.B_Y$ 
since the local self-intersection numbers of $B$
have the same sum on each conjugate of $Y$.
Let $n \colonequals L.B_Y$.
Then $ne = Z_{\horiz}.B_Y = Z.B_Y = d B_Y.B_Y$,
so $n = (d/e) B_Y.B_Y$.
The number of numerical classes of effective $L$ with $L.B_Y = n$
is $O(n^\rho)$ as before, uniformly in $e$ (because they are so bounded
even over $\F$).
We have $\# \PIC^\tau_Y(\F_{r^e}) = (r^e)^{O(1)}$.
Thus the number of possible $L$ is $O(n^\rho) (r^e)^{O(1)}$.
Applying Lemma~\ref{L:dimension of space of sections} to $Y_\F$ shows that 
\begin{align*}
	h^0(Y_{\F_{r^e}},L) 
	&= h^0(Y_{\F},L) \\
	&\le \frac{(L.B_Y)^2}{2 B_Y.B_Y} + O(L.B_Y) + O(1) \\
	&\le \frac{(d/e)^2 (B_Y.B_Y)^2}{2 B_Y.B_Y} + O(d/e) \\
	&\le \frac{(d/e)^2 B_Y.B_Y}{2} + O(d/e) \\
	&\le \frac{(d/e)^2 B.B}{2 [\F_r:\F_q]} + O(d/e),
\end{align*}
so
\[
	\# H^0(Y_{\F_{r^e}},L)
	= (r^e)^{h^0(Y_{\F_{r^e}},L)} 
	\le q^{d^2 B.B/2e + O(d)}
\]
since $r=O(1)$, and
\[
	\# \YY_e \le O(n^\rho) (r^e)^{O(1)} q^{d^2 B.B/2e + O(d)}
	\le q^{d^2 B.B/2e + O(d)}
\]
since $n$ and $e$ are $O(d)$, so
\[
	\# \YY = \sum_{e=2}^{O(d)} \# \YY_e 
	\le O(d) q^{d^2 B.B/4 + O(d)}.
\]
This divided by the quantity~\eqref{E:number of X_f} tends to $0$ 
as $d \to \infty$.
\end{proof}

\section{Reductions}\label{S:reductions}

\begin{lemma}
\label{L:finite etale}
Let $X$ and $Y$ be irreducible finite-type $\F$-schemes,
with morphisms $X \stackrel{\pi}\to Y \stackrel{\psi}\to \PP^n_\F$
such that $\pi$ is finite \'etale, 
$\psi \colon Y \to \overline{\psi(Y)}$ is smooth of relative dimension $s$,
and $\dim \overline{\psi(Y)} \ge 2$.
For $f$ in a set of density~$1$,
the implication
\[
	\textup{$Y_f$ irreducible $\implies$ $X_f$ irreducible}
\]
holds.
\end{lemma}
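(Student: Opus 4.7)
The plan is to combine a decomposition group argument with the function field Chebotarev density theorem. For reductions, by Lemma~\ref{L:open subscheme} applied to $(Y, \psi)$ and to $(X, \psi \circ \pi)$, I replace $Y$ by its smooth locus and $X$ by the preimage (still smooth because $\pi$ is \'etale). Next I replace $\pi$ by its Galois closure $\tilde{\pi}\colon \tilde{X} \to Y$, a finite \'etale Galois cover with group $G$, so that $X = \tilde{X}/H$ for some $H \le G$ and irreducibility of $\tilde{X}_f$ implies that of $X_f = \tilde{X}_f/H$. Renaming $\tilde{X}$ as $X$, I may assume $\pi$ is Galois with group $G$. Finally I spread out $X \to Y \to \PP^n_{\F}$ to a model $X_0 \to Y_0 \to \PP^n_{\F_{q^m}}$ over some $\F_{q^m}$; geometric irreducibility of $X$ and $Y$ forces the arithmetic monodromy group of $X_0 \to Y_0$ to equal its geometric monodromy group~$G$.

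Now fix $f$ such that $Y_f^0 \colonequals \psi_0^{-1}((H_f)_{\F_{q^m}})$ is geometrically irreducible, let $\eta$ be its generic point, and let $D \subseteq G$ be the decomposition group at $\eta$ in $X_0 \to Y_0$. The components of $X_0 \times_{Y_0} Y_f^0$ are permuted transitively by $G$ with stabilizer $D$, and $X_f$ is irreducible if and only if $D = G$. The key geometric observation is that for any closed point $y \in Y_f^0$ and any $x \in X_0$ above $y$ lying in the chosen component $Z$ (of stabilizer $D$), the cyclic stabilizer $G_x = \langle \Frob_y \rangle$ fixes $x$ and hence the whole component $Z$ containing $x$, so $G_x \subseteq D$; in particular, $\Frob_y$ lies in $\bigcup_{g \in G} gDg^{-1}$ for every closed point $y \in Y_f^0$.

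The final step is to show via Chebotarev that for a density~$1$ set of $f$, $Y_f^0$ contains closed points realizing every Frobenius conjugacy class in $G$. For each conjugacy class $[g] \subseteq G$, write $T_{[g]} = \{y \in Y_0 \textup{ closed}: \Frob_y \in [g]\}$; the function field Chebotarev density theorem applied to $X_0 \to Y_0$ gives $\#\{y \in T_{[g]} : \deg_{\F_{q^m}} y = d\} \sim (|[g]|/|G|)\, q^{m d \dim Y_0}/d$ as $d \to \infty$. Since each fiber of $\psi_0$ has dimension $s$, fiber multiplicities over $\F_{q^{md}}$ are $O(q^{msd})$, so $\psi_{0,\F_q}(T_{[g]})$ contains at least $c\, q^{m d \dim \overline{\psi(Y)}}/d$ distinct closed points of $\PP^n_{\F_q}$ of $\F_q$-degree at most $md$ for each~$d$. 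Using $\dim \overline{\psi(Y)} \ge 2$, the sum $\sum_p q^{-\deg_{\F_q} p}$ over $p \in \psi_{0,\F_q}(T_{[g]})$ diverges, so the density of $f$ avoiding all such $p$ equals $\prod_p(1 - q^{-\deg p}) = 0$. Intersecting over the finitely many conjugacy classes of~$G$, a density~$1$ set of $f$ have $Y_f^0$ meeting every $T_{[g]}$; for such $f$ with $Y_f$ irreducible, this forces $G = \bigcup_g gDg^{-1}$, hence $D = G$ (no proper subgroup of a finite group is the union of its conjugates), and so $X_f$ is irreducible. The main obstacle is the sum divergence in this last paragraph, which is where both the hypothesis $\dim \overline{\psi(Y)} \ge 2$ and the bound on the relative dimension of $\psi_0$ are essential.
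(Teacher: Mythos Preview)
Your argument closely parallels the paper's and correctly carries out the Galois-closure reduction, the spreading out, and the Chebotarev step; but the assertion ``$X_f$ is irreducible if and only if $D = G$'' is where it breaks. The group $D$ you define is the decomposition group at the generic point of $Y_f^0$ in the cover $X_0 \to Y_0$ over $\F_{q^m}$, so $D = G$ says exactly that $X_f^0 \colonequals X_0 \times_{Y_0} Y_f^0$ is irreducible over $\F_{q^m}$. The lemma, however, asks for $X_f = (X_f^0)_\F$ to be irreducible, i.e., for $X_f^0$ to be \emph{geometrically} irreducible. Your remark that the arithmetic and geometric monodromy of $X_0 \to Y_0$ agree does not force the arithmetic and geometric decomposition groups at $\eta$ to agree: a nontrivial constant-field extension can appear in $\kappa(X_f^0)/\kappa(Y_f^0)$ even though none appears in $\kappa(X_0)/\kappa(Y_0)$. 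Concretely, take $q$ odd, $Y_0 = \{s \ne 0\} \subset \Aff^2_{\F_q}$, $X_0 = \{u^2 = s\}$, $\psi_0$ the inclusion into $\PP^2$, and let $f$ cut out $Y_f^0 = \{s = ct^2,\; t \ne 0\}$ for a nonsquare $c \in \F_q^\times$. Then $\kappa(X_f^0) = \F_q(t)(t\sqrt{c}) = \F_{q^2}(t)$, so $D = G$ yet $X_f$ has two components. Moreover your Chebotarev condition \emph{is} satisfied here: a closed point $y$ of $Y_f^0$ of degree $e$ has $s(y) = c\,t(y)^2$, which is a square in $\F_{q^e}$ iff $e$ is even, so both Frobenius classes in $G = \Z/2$ occur (at degrees~$2$ and~$1$). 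Thus your argument would wrongly declare $X_f$ irreducible for this $f$.

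The paper closes this gap with an extra step (its Claim~2): for $f$ in a density~$1$ set, $X_f^0$ contains two closed points whose degrees over $\F_{q^m}$ are coprime, which forces the constant field of the irreducible scheme $X_f^0$ to be $\F_{q^m}$ and hence $X_f$ to be irreducible. This is proved with the same Chebotarev estimate you already invoked, applied to the trivial conjugacy class at two large coprime degrees $e, e'$ and letting $\min(e,e') \to \infty$. Inserting this step after your $D = G$ conclusion repairs the proof.
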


(The reverse implication holds for \emph{all} $f$.  Later
we will prove that both sides hold for $f$ in a set of density~$1$.)

\begin{proof}[Proof of Lemma~\ref{L:finite etale}]
Since irreducibility is a purely topological property,
we may replace $Y$ by $Y_{\red}$
and $\pi \colon X \to Y$ by its pullback to $Y_{\red}$;
then $X$ is reduced too.
Irreducibility of $X_f$ only becomes harder to achieve 
if $X$ is replaced by a higher finite \'etale cover of $Y$.
In particular, we may replace $X$ by a cover corresponding
to a Galois closure of $\kappa(X)/\kappa(Y)$.
So assume from now on that $X \to Y$ is Galois \'etale,
say with Galois group $G$.

Choose a finite extension $\F_r$ of $\F_q$
with a morphism $\psi' \colon Y' \to \PP^n_{\F_r}$ 
and a Galois \'etale cover $\pi' \colon X' \to Y'$ 
whose base extensions to $\F$ 
yield $\psi$ and $\pi$.
Let $Z' \colonequals \overline{\psi'(Y')}$.
Let $m \colonequals \dim Z' = \dim \overline{\psi(Y)} \ge 2$.
Then $\dim Y' = \dim Y = s+m$.
The morphism $\psi' \colon Y' \to Z'$ is smooth,
so it maps $(Y')^{\smooth}$ into $(Z')^{\smooth}$.

Given a closed point $y \in Y'$,
let $\Frob_y$ be the associated Frobenius conjugacy class in $G$.
We will prove that the following claims hold for $f$ in a set of density~$1$:
\begin{enumerate}[\phantom{m}{Claim}~1.]
\item The $\Frob_y$ for $y \in (Y_f')^{\smooth}$ 
cover all conjugacy classes of $G$.
\item The $\F_r$-scheme $(X_f')^{\smooth}$ 
contains two closed points whose degrees over $\F_r$ are coprime.
\end{enumerate}
Let $C$ be a conjugacy class in $G$.
Let $c:=\#C/\#G$.
In the arguments below, 
for fixed $X'$, $Y'$, $\psi'$, $\pi'$, $G$, and $C$, 
the expression $o(1)$ 
denotes a function of $e$ that tends to $0$ as $e \to \infty$.
By a function field analogue of the 
Chebotarev density theorem~\cite{Lang1956-Lseries}*{last display on p.~393}
(which, in this setting, follows from applying the Lang--Weil estimates
to all twists of the cover $Y' \to X'$),
the number of closed points $y \in (Y')^{\smooth}$ with residue field $\F_{r^e}$
satisfying $\Frob_y = C$
is $\left(c + o(1) \right)r^{(s+m)e}/e$.
Since each nonempty fiber of $\psi'$ has dimension~$s$,
there exists $c'>0$ such that 
the images of these points in $\PP^n_{\F_q}$ 
are at least $\left(c' + o(1) \right)r^{me}/e$ closed points
$z \in (Z')^{\smooth}$ with residue field of size at most $r^e$.
For any such $z$, say with residue field of size $r^\epsilon$,
the density of $\{f : z \notin H_f\}$
is $1-r^{-\epsilon}$,
and the density of 
$\{f : \textup{$z \in H_f$ and $H_f$ is not transverse to $Z'$ at $z$}\}$
is $r^{-\epsilon} r^{- \epsilon m}$,
so the union of these two disjoint sets has density 
$1-r^{-\epsilon} +r^{-\epsilon(1+m)} \le 1 -r^{-\epsilon}/2 \le 1-r^{-e}/2$.
These conditions at the finitely many $z$ are independent,
so the density of the set $\calQ_{C,e}$ of $f$ such that they hold at all $z$
is at most $\left(1 - r^{-e}/2 \right)^{\left(c' + o(1) \right)r^{me}/e}$,
which tends to $0$ as $e \to \infty$ since $m \ge 2$.
If the condition at some $z$ fails,
then $z \in (Z'_f)^{\smooth}$,
and any $y \in (Y')^{\smooth}$ with residue field $\F_{r^e}$
with $\psi'(y)=z$ lies in $(Y'_f)^{\smooth}$,
since $\psi' \colon Y' \to Z'$ is smooth.
Thus the complement $\calP_{C,e}$ of $\calQ_{C,e}$ equals
the set of $f$ for which there exists $y \in (Y'_f)^{\smooth}$ such that
$\kappa(y) = \F_{r^e}$ and $\Frob_y=C$.
The lower density of $\calP_{C,e}$ tends to $1$ as $e \to \infty$.

\medskip

Proof of Claim~1: There are only finitely many $C$,
so the previous sentence shows that
the lower density of $\Intersection_C \calP_{C,e}$
tends to $1$ as $e \to \infty$.

\medskip

Proof of Claim~2: 
If $f \in \calP_{1,e}$, then there exists $y \in (Y'_f)^{\smooth}$
with $\kappa(y)=\F_{r^e}$ and $\Frob_y=1$,
and any preimage $x \in X'_f$ is a point of $(X'_f)^{\smooth}$
satisfying $\kappa(x)=\F_{r^e}$, since $X_f' \to Y_f'$ is finite \'etale
and $\Frob_y=1$.
The lower density of $\calP_{1,e} \intersect \calP_{1,e'}$ tends to $1$
as $(e,e')$ runs through pairs of coprime integers with 
$\min(e,e') \to \infty$.

\medskip

To complete the proof of the lemma,
we show that if $Y_f$ is irreducible
and Claims~$1$ and~$2$ hold,
then $X_f$ is irreducible.
Assume that $Y_f$ is irreducible, so $Y_f'$ is geometrically irreducible.
The only subgroup of $G$ that meets all conjugacy classes
is $G$ itself, so Claim~1 implies that
$(X'_f)^{\smooth} \to (Y'_f)^{\smooth}$
is a finite Galois \emph{irreducible} cover 
(with Galois group $G$).

If $x'$ is a closed point of $(X'_f)^{\smooth}$ of degree $e$ over $\F_r$,
then applying Lemma~\ref{L:divisibility} to $\F \supseteq \F_r$
and $\{x'\} \hookrightarrow (X'_f)^{\smooth}$
shows that $\# \Irr X_f^{\smooth}$ divides $e$.
Applying this to both points in Claim~2 shows that 
$\# \Irr X_f^{\smooth} = 1$, so $X_f^{\smooth}$ is irreducible.
On the other hand, $Y_f$ is irreducible and $Y_f^{\smooth}$ is nonempty,
so $Y_f^{\smooth}$ is dense in $Y_f$;
since $X_f \to Y_f$ is finite \'etale, $X_f^{\smooth}$ is dense in $X_f$ too.
Combining the previous two sentences shows that $X_f$ is irreducible.
\end{proof}

We now strengthen Lemma~\ref{L:finite etale} to replace finite \'etale
by dominant (but we change the hypothesis on $\psi$ as well
since we will need only the case in which $\psi$ is an immersion).

\begin{lemma}
\label{L:dominant}
Let $X$ and $Y$ be irreducible finite-type $\F$-schemes,
with morphisms $X \stackrel{\pi}\to Y \stackrel{\psi}\to \PP^n_\F$
such that $\pi$ is dominant, $\psi$ is an immersion, 
and $\dim Y \ge 2$.
For $f$ in a set of density~$1$,
the implication
\[
	\textup{$(Y_f)_{\horiz}$ irreducible 
	$\implies$ $(X_f)_{\horiz}$ irreducible}
\]
holds.
\end{lemma}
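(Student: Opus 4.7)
The plan is to reduce the lemma to Lemma~\ref{L:finite etale} by factoring $\pi$ through a finite \'etale cover of $Y$ modelled on the separable algebraic closure of $\kappa(Y)$ in $\kappa(X)$. By Lemma~\ref{L:open subscheme}, it suffices to prove the implication after replacing $Y$ (and $X$) by dense open subschemes. First I shrink $Y$ so that $\psi|_Y$ is flat with equidimensional fibers of dimension $s \colonequals \dim Y - \dim \overline{\psi(Y)}$. By Lemma~\ref{L:vanishing on X}, outside a density-$0$ set, $f$ does not vanish on $\overline{\psi(Y)}$; then $H_f \intersect \overline{\psi(Y)}$ is equidimensional of dimension $\dim \overline{\psi(Y)} - 1 \ge 1$, and by flatness every component of $Y_f$ dominates a component of this intersection, so $(Y_f)_{\horiz} = Y_f$. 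An analogous shrinking of $X$ yields $(X_f)_{\horiz} = X_f$ for $f$ in a density-$1$ set.

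Next I factor $\pi$. Let $L$ be the separable algebraic closure of $\kappa(Y)$ in $\kappa(X)$; since $\kappa(X)/\kappa(Y)$ is finitely generated, $L/\kappa(Y)$ is a finite separable extension. After shrinking $Y$, the normalization of $Y$ in $L$ yields a finite \'etale cover $\pi_2 \colon Z \to Y$ with $\kappa(Z) = L$, and after further shrinking $X$ the inclusion $L \hookrightarrow \kappa(X)$ extends to a morphism $\pi_1 \colon X \to Z$ with $\pi = \pi_2 \circ \pi_1$ (concretely, $X$ is isomorphic to the open-and-closed component of $X \times_Y Z$ singled out by the chosen embedding $L \hookrightarrow \kappa(X)$, since $\pi_2$ is finite \'etale). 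Because $L$ is separably closed in $\kappa(X)$, the geometric generic fiber of $\pi_1$ is topologically irreducible. Invoking generic flatness and the openness of the locus on the base over which fibers are geometrically irreducible, I may shrink $Z$ and replace $X$ by its preimage so that $\pi_1$ is flat with geometrically irreducible fibers.

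Applying Lemma~\ref{L:finite etale} to $Z \xrightarrow{\pi_2} Y \xrightarrow{\psi} \PP^n_\F$ (whose hypotheses now hold: $\pi_2$ finite \'etale, $\psi$ of constant relative dimension $s$, and $\dim \overline{\psi(Y)} \ge 2$) gives: for $f$ in a density-$1$ set, $Y_f$ irreducible implies $Z_f$ irreducible. It remains to show that $Z_f$ irreducible forces $X_f$ irreducible. This is a topological fact: $\pi_1$ is flat of finite type, hence open, and its fibers are irreducible. If $X_f = X_1 \union X_2$ with each $X_i$ a proper closed subset, then $\pi_1(X_f \setminus X_1)$ and $\pi_1(X_f \setminus X_2)$ are nonempty opens in the irreducible space $Z_f$, so they share some point $z$; but then $\pi_1^{-1}(z) \intersect (X_f \setminus X_1)$ and $\pi_1^{-1}(z) \intersect (X_f \setminus X_2)$ are disjoint nonempty opens in the irreducible fiber $\pi_1^{-1}(z)$, a contradiction. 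Combined with $(Y_f)_{\horiz} = Y_f$ and $(X_f)_{\horiz} = X_f$, this completes the proof.

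The main obstacle is the factorization step of the second paragraph: arranging, after shrinking to dense opens, that $\pi$ genuinely factors as a morphism with geometrically irreducible fibers followed by a finite \'etale cover. This bundles the field-theoretic input (finiteness of the separable algebraic closure), the scheme-theoretic extension of the resulting rational map $X \dashrightarrow Z$ to an honest morphism, and the openness upgrade from the geometric generic fiber being irreducible to all fibers being so.
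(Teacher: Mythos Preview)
Your proof is correct and follows the same overall strategy as the paper: shrink to dense opens (justified by Lemma~\ref{L:open subscheme}), factor $\pi$ so as to isolate a finite \'etale piece, apply Lemma~\ref{L:finite etale} to that piece, and handle the remaining factor directly. The difference lies in the factorization. The paper writes $\kappa(X)$ as a purely inseparable extension of a finite separable extension of a purely transcendental extension of $\kappa(Y)$, spreading this out to
\[
X \xrightarrow{\text{radicial}} V \xrightarrow{\text{finite \'etale}} W \hookrightarrow \Aff^r_Y \to Y,
\]
and then treats each step by elementary means: the affine-space step because $\Aff^r$ over an irreducible base is irreducible, the open immersion via Lemma~\ref{L:open subscheme}, and the radicial step because it is a homeomorphism. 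You instead take the separable closure $L$ of $\kappa(Y)$ in $\kappa(X)$, producing $X \xrightarrow{\pi_1} Z \xrightarrow{\pi_2} Y$ with $\pi_2$ finite \'etale and $\pi_1$ having geometrically irreducible generic fiber; then you invoke the constructibility of the geometrically-irreducible-fiber locus to shrink so that all fibers of $\pi_1$ are geometrically irreducible, and conclude with the open-map-with-irreducible-fibers argument. Your route is a bit more conceptual and packages the transcendental and inseparable parts together, at the cost of citing the EGA openness result; the paper's route is more explicit and avoids that citation. Either way the substance is the same.
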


\begin{proof}
By Lemma~\ref{L:open subscheme},
we may replace $X$ and $Y$ by dense open subschemes;
thus we may assume that $\pi$ factors as
\[
\xymatrix{
	X \ar[rrr]^{\textup{surjective radicial}} 
	&&& V \ar[rr]^{\textup{finite \'etale}} 
	&& W \ar[rrr]^{\textup{dense open immersion}}
	&&& \Aff^r_Y \ar[rr] 
	&& Y
}
\]
for some $r \in \Z_{\ge 0}$,
because $\kappa(X)$
is a finite inseparable extension of a finite separable extension
of a purely transcendental extension of $\kappa(Y)$, 
and we may spread this out.
Then $(Y_f)_{\horiz}=Y_f$ and similarly for $\Aff^r_Y$, $W$, $V$, and $X$.
Irreducibility of $Y_f$ 
is equivalent to irreducibility of $(\Aff^r_Y)_f$;
for $f$ in a set of density~$1$, 
this is equivalent to irreducibility of $W_f$ 
(Lemma~\ref{L:open subscheme}),
which implies irreducibility of $V_f$ 
(Lemma~\ref{L:finite etale} applied to $V \to W \to \PP^n_\F$),
which is equivalent to irreducibility of $X_f$ (homeomorphism).
\end{proof}

Part~\eqref{I:part 1} of the following lemma and its proof 
are closely related to results of 
Lior Bary-Soroker~\cite{Bary-Soroker-letter};
see also \cite{Neumann1998}*{Lemma~2.1}, attributed to Wulf-Dieter Geyer.

\begin{lemma}
\label{L:hypersurface section}
Let $X$ be a smooth irreducible subscheme of $\PP^n_\F$ of dimension $m \ge 3$.
\begin{enumerate}[\upshape (a)]
\item\label{I:part 1}
There exists a hypersurface $J \subseteq \PP^n_{\F_q}$ 
such that $\dim J \intersect (\Xbar \setminus X) \le m-2$

and $J \intersect X$ is irreducible of dimension $m-1$.
\item\label{I:part 2}
For any such $J$,
there exists a density~$1$ set of $f$
for which the implication
\[
	\textup{$(J \intersect X)_f$ irreducible $\implies$ $X_f$ irreducible}
\]
holds.
\end{enumerate}
\end{lemma}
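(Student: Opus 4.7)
For part~\eqref{I:part 1}, I would construct $J$ by combining a Bertini smoothness result over finite fields with a connectedness theorem.  Applying a subscheme version of \cite{Poonen2004-bertini}*{Theorem~1.2} to a model of $X$ over some $\F_{q^r}$ (and intersecting the resulting density-$1$ conditions over all $\Gal(\F_{q^r}/\F_q)$-conjugates to descend to a condition on $J$ defined over $\F_q$) yields, for every sufficiently large $d$, a hypersurface $J \subseteq \PP^n_{\F_q}$ of degree $d$ with $J \intersect X$ smooth of dimension $m - 1$; by further excluding, via Lemma~\ref{L:vanishing on X}, the finitely many $(m-1)$-dimensional irreducible components of $\Xbar \setminus X$ (if any), I may also arrange $\dim J \intersect (\Xbar \setminus X) \le m - 2$.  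Since $\Xbar$ is projective irreducible of dimension $m \ge 3$ and the hypersurface $J$ is ample, Grothendieck's Bertini connectedness theorem shows that $J \intersect \Xbar$ is connected; the dimension estimate on the boundary forces every $(m-1)$-dimensional irreducible component of $J \intersect \Xbar$ to meet $X$ in a dense open subset, so $J \intersect X$ is also connected.  A smooth connected nonempty scheme is irreducible, giving the claimed $J$.

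For part~\eqref{I:part 2}, I would first discard, via Lemma~\ref{L:vanishing on X} applied to the positive-dimensional $J \intersect X$, the density-$0$ set of $f$ vanishing on $J \intersect X$; for the remaining $f$, the divisor $(J \intersect X)_f$ is a nonzero effective Cartier divisor on the irreducible variety $J \intersect X$, of pure dimension $m - 2$.  Suppose $X_f$ is reducible, and let $C_1, \ldots, C_k$ with $k \ge 2$ be the distinct irreducible components of $X_f^{\red}$: each has dimension $m-1$, and none equals the irreducible $(m-1)$-dimensional $J \intersect X$ (else $f$ would vanish on $J \intersect X$), so each $J \intersect C_i$ is either empty or of dimension exactly $m - 2$.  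Since the underlying set of $(J \intersect X)_f$ is $\Union_i (J \intersect C_i)$, irreducibility of $(J \intersect X)_f$ forces all nonempty $J \intersect C_i$ to coincide as a single irreducible $(m-2)$-dimensional subvariety $Z \subseteq J \intersect X$.

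It remains to show that the set of $f$ realizing this coincidence has density~$0$.  I would establish this by adapting the counting scheme from the proof of Proposition~\ref{P:2-dimensional case}: work on a smooth projective birational model of $X$, parametrize the possible decompositions $X_f = D + D'$ by the numerical class of $D$ (giving $O(d^\rho)$ classes after pairing against an ample divisor, as in the proof of Proposition~\ref{P:2-dimensional case}), and apply Lemma~\ref{L:dimension of space of sections} after cutting down by $m-2$ general hypersurface sections to reduce to a surface.  Requiring the restrictions $D|_{J \intersect X}$ and $D'|_{J \intersect X}$ to share a common irreducible support imposes a linear condition of codimension $\Omega(d)$ on the pair $(D,D')$ within each numerical class, yielding a savings factor $q^{-\Omega(d)}$ over the total count of $f \in S_d$.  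The main obstacle is carrying out this higher-dimensional count while tracking the restriction data on $J \intersect X$: the two-dimensional case of Proposition~\ref{P:2-dimensional case} already required delicate use of cones of effective curves and the finite-type Picard scheme, and the generalization must thread a surface-valued reduction through a pencil of hypersurface sections while ensuring the codimension savings from the restriction to $J \intersect X$ actually survives.
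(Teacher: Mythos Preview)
Your approach to part~(a) differs from the paper's and has a genuine gap in the connectedness step. Grothendieck's theorem gives only that $J \cap \Xbar$ is connected; you then assert that because every $(m-1)$-dimensional component of $J \cap \Xbar$ meets $X$ densely, $J \cap X$ is connected too. This inference is unjustified: two irreducible components $A, B$ of $J \cap \Xbar$ could meet only along points of $\Xbar \setminus X$ (nothing in your hypotheses rules out $A \cap B \subseteq J \cap (\Xbar \setminus X)$, which merely has dimension $\le m-2$), and then $A \cap X$ and $B \cap X$ are disjoint smooth pieces of $J \cap X$. Upgrading connectedness of $J \cap \Xbar$ to irreducibility is essentially the Bertini irreducibility problem you are trying to solve, so there is no cheap fix. (A smaller issue: Poonen's smoothness theorem is stated for $X$ defined over the same finite field as the polynomials; when the Galois conjugates of $X$ meet, the descent you sketch does not directly produce a smooth section by an $\F_q$-hypersurface---compare Lemma~\ref{L:singular locus has dim 0}, which with some effort obtains only \emph{finiteness} of $(X_f)^{\sing}$.) The paper sidesteps all of this: it builds a generically finite dominant morphism $\pi \colon \Xbar \to \PP^m_\F$ from suitably chosen forms $h_0,\dots,h_m$, takes $g$ with $H_g \subseteq \PP^m_{\F_q}$ geometrically integral, and applies Lemma~\ref{L:dominant} to transport irreducibility of $(H_g)_\F$ up to $X_g = \pi^{-1}(H_g) \cap X$, setting $J = \{g(h_0,\dots,h_m)=0\}$.

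For part~(b) you are closer to the paper than you realize, but you miss the key finishing move and replace it with an unworked-out counting argument. Once the nonempty $J \cap C_i$ all coincide with a single irreducible $(m-2)$-dimensional $Z$, note that $Z \subseteq C_i \cap C_j \subseteq (X_f)^{\sing}$ whenever two distinct $J \cap C_i$, $J \cap C_j$ are nonempty. Lemma~\ref{L:singular locus has dim 0} says $(X_f)^{\sing}$ is finite for $f$ in a set of density~$1$, and $\dim Z = m-2 \ge 1$ then gives an immediate contradiction---no counting needed. One must also verify that at least two of the $J \cap C_i$ are nonempty; the paper secures this by additionally excluding (via Lemma~\ref{L:vanishing on X}) the $f$ vanishing on a positive-dimensional component of $J \cap (\Xbar \setminus X)$, so that $J \cap (\Xbar \setminus X)_f$ has dimension at most $m-3$ and each $J \cap V_i$ inherits dimension at least $m-2$ from $J \cap \bar V_i$. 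Your proposed higher-dimensional adaptation of the surface count in Proposition~\ref{P:2-dimensional case} is both unnecessary and, as you yourself acknowledge, not actually carried out.
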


\begin{proof}
\hfill
\begin{enumerate}[\upshape (a)]
\item
Inductively choose $h_0,\ldots,h_m \in S_{\homog}$
so that the common zero locus of $h_0,\ldots,h_r$ on $\Xbar$
is of the expected dimension $m-r-1$ for $r \in \{0,1,\ldots,m-1\}$
and empty for $r=m$.
Replace every $h_i$ by a power to assume that they have the same degree.
Then $(h_0:\cdots:h_m) \colon \PP^n_\F \dashrightarrow \PP^m_\F$
restricts to a morphism $\pi \colon \Xbar \to \PP^m_\F$
whose fiber above $(0:\cdots:0:1)$ is $0$-dimensional,
so $\pi$ is generically finite.
Since $\dim \Xbar = m$, the morphism $\pi$ is dominant.

Let $Z$ be the set of points of $\PP^m_{\F_q}$
above which the fiber has codimension~$1$ in $\Xbar$.
Since $m>1$ and $\pi$ is dominant, $Z$ is finite.
Let $B_1,\ldots,B_s$ be the images in $\PP^m_{\F_q}$
of the irreducible components of $\Xbar \setminus X$.
Let $Z'$ be the union of $Z$ with all the $0$-dimensional $B_i$.
The density of homogeneous polynomials $g$ on $\PP^m_{\F_q}$
such that $H_g \intersect Z' = \emptyset$ 
is $\prod_{z \in Z'} (1- 1/\#\kappa(z)) > 0$.
The density of such $g$ 
such that also $H_g$ is geometrically integral
and does not contain any positive-dimensional $B_i$ is the same
by \cite{Poonen2004-bertini}*{Proposition~2.7} 
and Lemma~\ref{L:vanishing on X}.
For such $g$, the subscheme $X_g \colonequals \pi^{-1} H_g$
is horizontal and contains no irreducible component of $\Xbar \setminus X$.
Lemma~\ref{L:dominant} applied to 
$X \stackrel{\pi}\to \PP^m_{\F} \stackrel{\Id}\to \PP^m_{\F}$
shows that, after excluding a further density~$0$ set,
$X_g$ is irreducible of dimension $m-1$.
Let $J$ be the hypersurface in $\PP^n_{\F_q}$ 
defined by $g(h_0,\ldots,h_m)$.
Then $J$ contains no irreducible component of $\Xbar \setminus X$,
so $\dim J \intersect (\Xbar \setminus X) 
\le \dim(\Xbar \setminus X) - 1 \le m-2$.
Also, $J \intersect X = X_g$, which is irreducible of dimension $m-1$.
\item
Consider $f$ such that 
$f$ does not vanish on any positive-dimensional irreducible component
of $J \intersect (\Xbar \setminus X)$
and $(X_f)^{\sing}$ is finite.
By Lemmas \ref{L:vanishing on X} and~\ref{L:singular locus has dim 0},
this set has density~$1$.

Suppose that $(J \intersect X)_f$ is irreducible
but $X_f$ is reducible, say $X_f = V_1 \union V_2$,
where $V_i$ are closed subsets of $X_f$,
neither containing the other.
Then $\dim V_i \ge m-1$ for each $i$.
Since $J$ is a hypersurface, 
$J \intersect \Vbar_i$ is nonempty and of dimension at least $m-2$.
On the other hand,
$J \intersect (\Vbar_i \setminus V_i) 
\subseteq J \intersect (\Xbar \setminus X)_f$,
which is of dimension at most $m-3$.
Thus $J \intersect V_i$ is nonempty of dimension at least $m-2 \ge 1$.
Since $(J \intersect X)_f = (J \intersect V_1) \union (J \intersect V_2)$,
one of the $J \intersect V_i$ must contain the other,
say $(J \intersect V_2) \subseteq (J \intersect V_1)$.
Then $J \intersect V_2 \subseteq V_1 \intersect V_2 \subseteq (X_f)^{\sing}$.
This is a contradiction since $\dim J \intersect V_2 \ge 1$ 
and $(X_f)^{\sing}$ is finite.\qedhere
\end{enumerate}
\end{proof}

\begin{proposition}
\label{P:1.2 over F_q}
Let $X$ be an irreducible subscheme of $\PP^n_{\F_q}$
of dimension at least $2$.
For $f$ in a set of density~$1$, there is a bijection 
$\Irr X_{\F} \to \Irr (X_f)_\F$ sending $C$ to $C_f$.
\end{proposition}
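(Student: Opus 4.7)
The plan is to proceed by induction on $m \colonequals \dim X$. The base case $m=2$ follows immediately by applying Proposition~\ref{P:2-dimensional case} to $X_{\red}$, which is a $2$-dimensional closed integral subscheme of $\PP^n_{\F_q}$. For the inductive step $m \ge 3$, I first replace $X$ by $X_{\red}$, so that $X$ becomes integral; since $\F_q$ is perfect, $X_{\F}$ is reduced, and its irreducible components $Y_1,\ldots,Y_k$ form a single $\Gal(\F/\F_q)$-orbit. Write $Y \colonequals Y_1$. The condition in the conclusion is automatically $\Gal(\F/\F_q)$-invariant in $f$, so it suffices to produce a density~$1$ set of $f$ for which $Y_f$ is irreducible and the subschemes $(Y_i)_f$ are pairwise distinct: Galois symmetry then yields irreducibility of every $(Y_i)_f$, and these are precisely the irreducible components of $(X_f)_{\F}$.

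To force $Y_f$ irreducible, I would apply Lemma~\ref{L:hypersurface section}\eqref{I:part 1} to the smooth dense open subscheme $U \colonequals Y^{\smooth}$, producing an $\F_q$-hypersurface $J \subseteq \PP^n_{\F_q}$ with $J \cap U$ irreducible of dimension $m-1$ and $\dim J \cap (Y \setminus U) \le m-2$.  Since $J$ does not vanish on $Y$, Krull's theorem makes $J \cap Y$ pure of dimension $m-1$, while the dimension bound on $J \cap Y^{\sing}$ forbids any extra top-dimensional component, so $J \cap Y$ is topologically irreducible with $J \cap U$ dense in it.  Combining Lemma~\ref{L:hypersurface section}\eqref{I:part 2} with two applications of Lemma~\ref{L:open subscheme} (to the dense open inclusions $J \cap U \subseteq J \cap Y$ and $U \subseteq Y$) then gives, for $f$ in a density~$1$ set, the chain of implications
\[
(J \cap Y)_f \text{ irreducible} \;\Longrightarrow\; (J \cap U)_f \text{ irreducible} \;\Longrightarrow\; U_f \text{ irreducible} \;\Longrightarrow\; Y_f \text{ irreducible},
\]
so the problem reduces to proving irreducibility of $(J \cap Y)_f$.

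For this I would set $W \colonequals (J \cap X)_{\red}$, a closed reduced subscheme of $\PP^n_{\F_q}$ that is pure of dimension $m-1$. By Galois transport, each $J \cap Y_i$ is irreducible of dimension $m-1$, and the distinct $J \cap Y_i$ form a single $\Gal(\F/\F_q)$-orbit, which makes $W$ an irreducible subscheme of $\PP^n_{\F_q}$ of dimension $m-1 \ge 2$. The inductive hypothesis applied to $W$ then provides, for density~$1$ $f$, a bijection $\Irr W_\F \to \Irr (W_f)_\F$ sending $J \cap Y$ to $(J \cap Y)_f$, so $(J \cap Y)_f$ is irreducible, as required. For distinctness of the $(Y_i)_f$, an equality $(Y_i)_f = (Y_j)_f$ (both irreducible of dimension $m-1$) would force $H_f \cap Y_i$ to coincide set-theoretically with some fixed $(m-1)$-dimensional irreducible component of $Y_i \cap Y_j$, which forces $f$ to vanish on that component---a density~$0$ condition by Lemma~\ref{L:vanishing on X}. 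The main technical obstacle is verifying that $W$ is $\F_q$-irreducible: this rests on showing that the single $\Gal(\F/\F_q)$-orbit structure of the $Y_i$ descends to the distinct $J \cap Y_i$, so that one application of the inductive hypothesis simultaneously controls all of the $(J \cap Y_i)_f$.
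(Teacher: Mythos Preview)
Your argument follows essentially the same inductive strategy as the paper's: use Lemma~\ref{L:hypersurface section} on a geometric irreducible component to produce $J$, then apply the inductive hypothesis to $J\cap X$ over $\F_q$. The paper streamlines matters by first shrinking $X$ to its smooth locus (via Lemma~\ref{L:open subscheme}) so that the geometric components are disjoint and Lemma~\ref{L:restatement of bijection} replaces your separate distinctness argument; one small correction to your base case: $X_{\red}$ need not be \emph{closed} in $\PP^n_{\F_q}$, so you must pass to $\overline{X}$ and then back via Lemma~\ref{L:open subscheme}, exactly as the paper does.
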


\begin{proof}
We use induction on $\dim X$.
Replace $X$ by $X_\red$; then $X$ is integral.
The case $\dim X=2$ follows from Proposition~\ref{P:2-dimensional case}
for $\Xbar$, 
by using Lemma~\ref{L:open subscheme} to pass from $X$ to $\Xbar$.

Now suppose $\dim X>2$.
Because of Lemma~\ref{L:open subscheme},
we may shrink $X$ to assume that $X$ is smooth.
Choose one $C \in \Irr X_\F$.
Choose $J$ as in Lemma~\ref{L:hypersurface section} applied to $C$,
so $J \intersect C$ is irreducible.
Then so is its image $J \intersect X$ under $C \to X$.
Also, $\dim(J \intersect X)=\dim X-1 \ge 2$.

We have $J \intersect C \in \Irr((J \intersect X)_\F)$.
The inductive hypothesis applied to $J \intersect X$
shows that for $f$ in a set of density~$1$,
the scheme $(J \intersect C)_f$ is irreducible.
By Lemma~\ref{L:hypersurface section}\eqref{I:part 2}, 
for $f$ in a smaller density~$1$ set, 
this implies that $C_f$ is irreducible.
Each $C' \in \Irr X_\F$ is conjugate to $C$,
so $C'_f$ is irreducible for the same $f$.
For these $f$, Lemma~\ref{L:restatement of bijection} 
implies the conclusion.
\end{proof}

\begin{proposition}
\label{P:1.6 over F_q}
Let $X$ be a finite-type $\F_q$-scheme.
Let $\phi \colon X \to \PP^n_{\F_q}$ be a morphism such that 
$\dim \overline{\phi(C)} \ge 2$ for each $C \in \Irr X$.
For $f$ in a set of density~$1$,
there is a bijection $\Irr X_{\F} \to \Irr_{\horiz} (X_f)_{\F}$
sending $C$ to $(C_f)_{\horiz}$.
\end{proposition}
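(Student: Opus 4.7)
The plan is to reduce to Proposition~\ref{P:1.2 over F_q} by factoring $\phi$ through its image and then propagating the resulting irreducibility via Lemma~\ref{L:dominant}. First I would replace $X$ by $X_{\red}$; by Lemma~\ref{L:open subscheme} (applied to $X_\F$, noting that densities of $\F_q$-polynomials are unchanged under base change to $\F$), I may shrink $X$ to a dense open subscheme and so assume $X$ is smooth over $\F_q$. Working one $\F_q$-irreducible component at a time, I may further assume $X$ is $\F_q$-integral, so that the $\F$-irreducible components of $X_\F$ are pairwise disjoint and form a single $\Gal(\F/\F_q)$-orbit.

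Next, set $Y \colonequals \overline{\phi(X)}_{\red} \subseteq \PP^n_{\F_q}$. Then $Y$ is an $\F_q$-integral subscheme of $\PP^n$ of dimension at least~$2$, and $\phi$ factors as $X \stackrel{\pi}{\to} Y \hookrightarrow \PP^n_{\F_q}$ with $\pi$ dominant. Proposition~\ref{P:1.2 over F_q} applied to $Y$ furnishes a density-$1$ set of $f$ for which $D_f$ is irreducible for every $D \in \Irr Y_\F$. For each $C \in \Irr X_\F$, the closure $D \colonequals \overline{\pi(C)}$ is an irreducible component of $Y_\F$ (its generic point lies above the generic point of $Y$, and all $\F$-components of $Y_\F$ have the dimension of $Y$), and $C \to D$ is dominant. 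Lemma~\ref{L:dominant} applied to $C \to D \hookrightarrow \PP^n_\F$ then shows, on a further density-$1$ set, that irreducibility of $(D_f)_{\horiz}$ implies irreducibility of $(C_f)_{\horiz}$. After also excluding the $f$ vanishing on $D$ via Lemma~\ref{L:vanishing on X}, the scheme $D_f$ has pure dimension $\ge 1$, so $(D_f)_{\horiz} = D_f$ is irreducible, whence so is $(C_f)_{\horiz}$. The finitely many components $C$ are Galois-conjugate, so intersecting their density-$1$ sets yields a single density-$1$ set of $f$ for which $(C_f)_{\horiz}$ is irreducible for every $C \in \Irr X_\F$.

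Finally, Lemma~\ref{L:restatement of bijection} applied to the smooth $\F$-scheme $X_\F$ with morphism to $\PP^n_\F$ upgrades this componentwise irreducibility into the desired bijection $\Irr X_\F \to \Irr_{\horiz}(X_f)_\F$, $C \mapsto (C_f)_{\horiz}$. I expect the main subtlety to lie in the Galois-equivariance bookkeeping: tracking how $\Gal(\F/\F_q)$ permutes the $\F$-irreducible components of $X_\F$ and $Y_\F$, verifying that $\pi$ sends each $C \in \Irr X_\F$ onto a Zariski-dense subset of some $D \in \Irr Y_\F$, and confirming that the density-$1$ conclusions produced componentwise combine cleanly. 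Modulo those details, the proof is a direct assembly of the earlier lemmas and of Proposition~\ref{P:1.2 over F_q}.
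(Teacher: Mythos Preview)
Your proof is correct and follows essentially the same route as the paper's: reduce to smooth irreducible $X$, let $X' = \overline{\phi(X)}$, apply Proposition~\ref{P:1.2 over F_q} to $X'$ to make each $C'_f$ irreducible, push this up via Lemma~\ref{L:dominant} along $C \to C' \hookrightarrow \PP^n_\F$, and conclude with Lemma~\ref{L:restatement of bijection}. Your extra remarks (that $(D_f)_{\horiz}=D_f$ once $f$ does not vanish on $D$, and the Galois bookkeeping) are correct elaborations of steps the paper leaves implicit.
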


\begin{proof}
Replace $X$ by $X_\red$ to assume that $X$ is reduced. 
Because of Lemma~\ref{L:open subscheme},
we may shrink $X$ to assume that $X$ is smooth;
then its irreducible components are disjoint.
If the conclusion of Proposition~\ref{P:1.6 over F_q} holds 
for each $C \in \Irr X$, then it holds for $X$.
So assume that $X$ is irreducible.
Let $X' \colonequals \overline{\phi(X)}$.
Consider $C \in \Irr X_\F$.
Let $C' \colonequals \overline{\phi(C)} \in \Irr X'_\F$.
For $f$ in a set of density~$1$,
Proposition~\ref{P:1.2 over F_q} for $X'$ implies irreducibility of $C'_f$,
which, by Lemma~\ref{L:dominant} for $C \to C' \injects \PP^n_\F$,
implies irreducibility of $(C_f)_{\horiz}$.
By Lemma~\ref{L:restatement of bijection},
there is a bijection $\Irr X_\F \to \Irr_{\horiz}(X_f)_{\F}$
sending $C$ to $(C_f)_{\horiz}$.
\end{proof}

\begin{proof}[Proof of Theorem~\ref{T:inverse image bijection}]
We are given a finite-type $\F$-scheme $X$
and a morphism $\phi \colon X \to \PP^n_\F$
such that $\dim \overline{\phi(C)} \ge 2$ for each $C \in \Irr X$.
Let $\F_r$ be a finite extension of $\F_q$
such that $X$, $\phi$, and all irreducible components of $X$ 
are defined over $\F_r$.
{}From now on, consider $X$ and $\phi$ as objects over $\F_r$.
We need to prove that there is a bijection 
$\Irr X_{\F} \to \Irr_{\horiz} (X_f)_{\F}$ 
sending $C$ to $(C \intersect X_f)_{\horiz}$.

The composition $X \to \Spec \F_r \to \Spec \F_q$ lets us reinterpret $X$
as a finite-type $\F_q$-scheme $\calX$ with a morphism $\psi$ to $\PP^n_{\F_q}$ 
fitting in a commutative diagram
\[
\xymatrix{
X \ar@{=}[r] \ar[d]_{\phi} & \calX \ar[d]^{\psi} \\
\PP^n_{\F_r} \ar[r] \ar[d] & \PP^n_{\F_q} \ar[d] \\
\Spec \F_r \ar[r] & \Spec \F_q. \\
}
\]
Since $X$ and $\calX$ are equal as schemes (forgetting the base field),
each irreducible component $C$ of $X$ is an irreducible component $\calC$
of $\calX$.
The morphism $\PP^n_{\F_r} \to \PP^n_{\F_q}$ is finite,
so $\dim \overline{\psi(\calC)} = \dim \overline{\phi(C)} \ge 2$.
Proposition~\ref{P:1.6 over F_q} applied to $\psi$
yields a bijection $\Irr \calX_\F \to \Irr_{\horiz} (\calX_f)_\F$.
We now rephrase this in terms of $X$.
The identification of $X$ with $\calX$
equates $X_f \colonequals \phi^{-1} (H_f)_{\F_r}$ 
with $\calX_f \colonequals \psi^{-1} H_f$.
Then we have a diagram of $\F$-schemes
\[
\xymatrix{
(\calX_f)_\F \ar@{=}[r] \ar[d] & \coprod_\sigma {}^\sigma \! X_f \ar[d] \\
\calX_\F \ar@{=}[r] & \coprod_\sigma {}^\sigma \! X \\
}
\]
where $\sigma$ ranges over $\F_q$-homomorphisms $\F_r \to \F$,
and ${}^\sigma \! X$ denotes the corresponding base extension
(and ${}^\sigma \! X_f$ is similar),
and the vertical map on the right is induced by the inclusion $X_f \injects X$.
Thus for each $\sigma$, 
there is a bijection $\Irr {}^\sigma \! X \to \Irr_{\horiz} {}^\sigma \! X_f$
sending each $C$ to $(C \intersect {}^\sigma \! X_f)_{\horiz}$.
Taking $\sigma$ to be the inclusion $\F_r \injects \F$
yields the conclusion of Theorem~\ref{T:inverse image bijection}.
\end{proof}

\section*{Acknowledgements} 

We thank Ivan Panin and Zinovy Reichstein for independently asking us 
in June 2013 whether Theorem~\ref{T:geometrically irreducible} is true.
We thank also Lior Bary-Soroker for sending us 
his letter~\cite{Bary-Soroker-letter},
in which he developed a new explicit Hilbert irreducibility theorem 
to prove that 
for a geometrically irreducible quasi-projective variety $X$ over $\F_q$
of dimension $r+1 \ge 2$, 
there is \emph{some} dominant rational map 
$\phi \colon X \dashrightarrow \PP^{r+1}_{\F_q}$
such that for a density~$1$ set of hypersurfaces of the form
$y=h(x_1,\ldots,x_r)$ in $\Aff^{r+1}_{\F_q}$,
the closure of its preimage in $X$ is geometrically irreducible.
We thank K\k{e}stutis \v{C}esnavi\v{c}ius for asking a question
that led to Theorem~\ref{T:variety through finite set},
and we thank Jiayu Zhao for pointing out an error 
in an earlier version of Lemma~\ref{L:finite etale},
and for pointing out the relevance of normality in a
statement like Lemma~\ref{L:divisibility}.
Finally, we thank the referees for many helpful suggestions.

\begin{bibdiv}
\begin{biblist}


\bib{Bary-Soroker-letter}{misc}{
  author={Bary-Soroker, Lior},
  date={2013-09-16},
  note={Letter to Bjorn Poonen, available at \url {http://www.math.tau.ac.il/~barylior/files/let_poonen.pdf}\phantom {i}},
}

\bib{Benoist2011}{article}{
  author={Benoist, Olivier},
  title={Le th\'eor\`eme de Bertini en famille},
  language={French, with English and French summaries},
  journal={Bull. Soc. Math. France},
  volume={139},
  date={2011},
  number={4},
  pages={555--569},
  issn={0037-9484},
  review={\MR {2869305 (2012m:14100)}},
}

\bib{Conrad2007}{article}{
  author={Conrad, Brian},
  title={Deligne's notes on Nagata compactifications},
  journal={J. Ramanujan Math. Soc.},
  volume={22},
  date={2007},
  number={3},
  pages={205--257},
  issn={0970-1249},
  review={\MR {2356346 (2009d:14002)}},
}

\bib{Duncan-Reichstein-preprint}{misc}{
  author={Duncan, Alexander},
  author={Reichstein, Zinovy},
  title={Pseudo-reflection groups and essential dimension},
  date={2014-02-28},
  note={Preprint, \texttt {arXiv:1307.5724v2}},
}

\bib{Jouanolou1983}{book}{
  author={Jouanolou, Jean-Pierre},
  title={Th\'eor\`emes de Bertini et applications},
  language={French},
  series={Progress in Mathematics},
  volume={42},
  publisher={Birkh\"auser Boston Inc.},
  place={Boston, MA},
  date={1983},
  pages={ii+127},
  isbn={0-8176-3164-X},
  review={\MR {725671 (86b:13007)}},
}

\bib{Lang1956-Lseries}{article}{
  author={Lang, Serge},
  title={Sur les s\'eries $L$ d'une vari\'et\'e alg\'ebrique},
  language={French},
  journal={Bull. Soc. Math. France},
  volume={84},
  date={1956},
  pages={385--407},
  issn={0037-9484},
  review={\MR {0088777 (19,578c)}},
}

\bib{Lazarsfeld2004vol1}{book}{
  author={Lazarsfeld, Robert},
  title={Positivity in algebraic geometry. I},
  series={Ergebnisse der Mathematik und ihrer Grenzgebiete. 3. Folge. A Series of Modern Surveys in Mathematics [Results in Mathematics and Related Areas. 3rd Series. A Series of Modern Surveys in Mathematics]},
  volume={48},
  note={Classical setting: line bundles and linear series},
  publisher={Springer-Verlag, Berlin},
  date={2004},
  pages={xviii+387},
  isbn={3-540-22533-1},
  review={\MR {2095471 (2005k:14001a)}},
  doi={10.1007/978-3-642-18808-4},
}

\bib{MumfordAV1970}{book}{
  author={Mumford, David},
  title={Abelian varieties},
  series={Tata Institute of Fundamental Research Studies in Mathematics, No. 5 },
  publisher={Published for the Tata Institute of Fundamental Research, Bombay},
  date={1970},
  pages={viii+242},
  review={\MR {0282985 (44 \#219)}},
}

\bib{Neumann1998}{article}{
  author={Neumann, Konrad},
  title={Every finitely generated regular field extension has a stable transcendence base},
  journal={Israel J. Math.},
  volume={104},
  date={1998},
  pages={221--260},
  issn={0021-2172},
  review={\MR {1622303 (99d:12002)}},
  doi={10.1007/BF02897065},
}

\bib{Panin-preprint1}{misc}{
  author={Panin, Ivan},
  title={On Grothendieck-Serre conjecture concerning principal G-bundles over regular semi-local domains containing a finite field: I},
  date={2014-06-02},
  note={Preprint, \texttt {arXiv:1406.0241v1}},
}

\bib{Panin-preprint2}{misc}{
  author={Panin, Ivan},
  title={On Grothendieck-Serre conjecture concerning principal G-bundles over regular semi-local domains containing a finite field: II},
  date={2014-06-02},
  note={Preprint, \texttt {arXiv:1406.1129v1}},
}

\bib{Panin-preprint3}{misc}{
  author={Panin, Ivan},
  title={Proof of Grothendieck--Serre conjecture on principal G-bundles over regular local rings containing a finite field},
  date={2014-06-02},
  note={Preprint, \texttt {arXiv:1406.0247v1}},
}

\bib{Poonen2004-bertini}{article}{
  author={Poonen, Bjorn},
  title={Bertini theorems over finite fields},
  journal={Ann. of Math. (2)},
  volume={160},
  date={2004},
  number={3},
  pages={1099--1127},
  issn={0003-486X},
  review={\MR {2144974 (2006a:14035)}},
}

\bib{Poonen2008-subvariety}{article}{
  author={Poonen, Bjorn},
  title={Smooth hypersurface sections containing a given subscheme over a finite field},
  journal={Math. Res. Lett.},
  volume={15},
  date={2008},
  number={2},
  pages={265--271},
  issn={1073-2780},
  review={\MR {2385639 (2009c:14037)}},
}

\bib{Raynaud1970henseliens}{book}{
  author={Raynaud, Michel},
  title={Anneaux locaux hens\'eliens},
  language={French},
  series={Lecture Notes in Mathematics, Vol. 169},
  publisher={Springer-Verlag},
  place={Berlin},
  date={1970},
  pages={v+129},
  review={\MR {0277519 (43 \#3252)}},
}

\end{biblist}
\end{bibdiv}

\end{document}